\def\N{\mathbb{N}}
\def\revision#1{{#1}}
\newcommand{\revmarg}[1]{}
\title{The Ivanov regularized Gauss-Newton method in Banach space with an a posteriori choice of the regularization radius}
\abstract{In this paper we consider the iteratively regularized Gauss-Newton method, where regularization is achieved by Ivanov regularization, i.e., by imposing a priori constraints on the solution. We propose an a posteriori choice of the regularization radius, based on an inexact Newton / discrepancy principle approach, prove convergence and convergence rates under a variational source condition as the noise level tends to zero, and provide an analysis of the discretization error. Our results are valid in general, possibly nonreflexive Banach spaces, including, e.g., $L^\infty$ as a preimage space.
The theoretical findings are illustrated by numerical experiments.
}
\keywords{nonlinear ill-posed problem, regularization, Newton's method, Ivanov regularization, method of quasi soliutions}
\begin{document}

\section{Introduction}
Consider an inverse problem given as a nonlinear ill-posed operator equation
\begin{equation}\label{Fxg}
F(u)=g,
\end{equation} where the possibly nonlinear operator $F:\mathcal{D}(F)\subset \mathcal{U}\longrightarrow \mathcal{Y}$ with domain $\mathcal{D}(F)$ maps between real Banach spaces $\mathcal{U}$ and $\mathcal{Y}$.
The task is to recover $u$ (or actually an approximation of it), given noisy observations $g^\delta$ of $g$. Due to the ill-posedness of \eqref{Fxg}, i.e., the lack of continuous invertibility of $F$, the problem needs to be regularized (see, e.g., \cite{BakKok04,EnglHankeNeubauer,Groetsch,KalNeuSch08,Kirsch,Louis,MorozovBuch,SKHK12,TikhonovArsenin,VainikkoVeterennikov,VasinAgeev}, and the references therein).

Throughout this paper we will assume that an exact solution $\revision{u^*} \in \mathcal{D}(F)$ of \eqref{Fxg} exists, i.e., $F(\revision{u^*})=g$, and that the deterministic noise level $\delta$ in the estimate 
\begin{equation}\label{noise}
\|g-g^\delta\|\leq \delta,
\end{equation} is known.

Ivanov regularization, i.e., the option of using a prior bounds for regularizing ill-posed problem has been known for a long time already, partly also as method of quasi solutions \cite{DombrovskajaIvanov65,Ivanov62,Ivanov63,IvanovVasinTanana02,SeidmanVogel89} and has been revisited and further analyzed recently 
\cite{ClasonKlassen18,KK17,LorenzWorliczek13,NeubauerRamlau14}. 

Here, we are particularly interested in the IRGN-Ivanov method, i.e., we define Newton type iterates $u_{k+1}^\delta$ in an Ivanov regularized way as
\begin{equation}\label{ivanov}
\begin{aligned}
u_{k+1}^\delta(\rho) \in \mbox{argmin}_{u\in\mathcal{D}(F)} \|F'(u_k^\delta)(u-u_k^\delta)+F(u_k^\delta)-g^\delta\| \\
\mbox{ such that } \mathcal{R}(u)\leq \rho_k,
\end{aligned}
\end{equation} 
with stopping index $k_*=k_*(\delta,g^\delta)$ according to the discrepancy principle
\begin{equation}\label{discprinc}
k_*=k_*(\delta,g^\delta)=\min \{k\in\mathbb{N}_0 : \|F(u^\delta_k)-g^\delta\| \leq \tau \delta\},
\end{equation} 
for $\tau>1$ chosen sufficiently large independently of $\delta$. 
(This includes the theoretical value $k_*=\infty$ in case the set over which the minimum is taken is empty, which we will anyway exclude later on, though). 
Regularization is defined by a proper and convex functional $\mathcal{R}$.
The example 
\begin{equation}\label{Rnorm}
\mathcal{R}(u)=\|u-u_0\|^p 
\end{equation}
with some norm defined on $\mathcal{D}(F)$, some $p>1$ and some a priori guess $u_0$ is frequently used in practice and will also be focused on in some of our results (e.g., Proposition \ref{prop:convR} below). 
In case of $\mathcal{R}$ being defined by the $L^\infty$ norm, the inequality in \eqref{ivanov} becomes a pointwise bound constraint and efficient methods for such mimimization problems can be 
\revision{
employed,
}
\revmarg{1, 1.}
\revmarg{2, 12.}
\revmarg{3, 4.}
cf., e.g., \cite{ClasonKlassen18,compminIP} in the context of ill-posed problems.
Since sums of convex functions are again convex, $\mathcal{R}$ may be composed as a sum of different terms. For instance, additional a priori information on $u$ might be incorporated in a strict sense by adding in $\mathcal{R}$ the indicator function of some convex set $\mathcal{C}$. 

The method \eqref{ivanov}, \eqref{discprinc} has already been considered in \cite{KaltenbacherPreviatti18}, with a choice of the  regularization radii $\rho_k$ that refers to the value of the regularization functional at the exact solution, more precisely, $\rho_k\equiv \rho\geq\mathcal{R}(u^*)$. Since this information might not be available in some practical applications, we here propose a special choice of $\rho_k$ that does not require the knowledge of $\mathcal{R}(u^*)$.
Instead, for fixed $0 < \theta < \Theta < 1$ 
and $k\leq k_*$, the regularization parameter $\rho=\rho_k$ is chosen in an {\em a posteriori} fashion according to the inexact Newton type rule
\begin{equation}\label{choice}
\theta\|F(u_k^\delta)-g^\delta\|\leq \|F'(u_k^\delta)(u_{k+1}^\delta(\rho)-u_k^\delta)+F(u_k^\delta)-g^\delta\|\leq \Theta\|F(u_k^\delta)-g^\delta\|,
\end{equation} 
(see also \cite{HankeLevMar,Rieder} in the context of the Levenberg-Marquardt method in Hilbert spaces)
provided 
\begin{equation}\label{caseA}
\|F'(u_k^\delta)(u_0-u_k^\delta)+F(u_k^\delta)-g^\delta\|\ge\Theta\|F(u_k^\delta)-g^\delta\|
\end{equation}
In this case we set $u_{k+1}^\delta := u_{k+1}^\delta (\rho_k)$.
\revision{
We will show that \eqref{caseA} is in fact always satisfied.
}
\revmarg{1, 8.}
\revmarg{2, 1.}
As compared to \cite{KaltenbacherPreviatti18}, where $\rho_k\geq\mathcal{R}(u^*)$, we will here prove that the reguarization radius chosen according to \eqref{choice} satisfies $\rho_k\leq\mathcal{R}(u^*)$, cf. \eqref{mon}, which implies that the regularization acts in a more restrictive, hence stronger stabilizing way. 

We point out that the proof of well-definedness of the regularization radius will strongly rely on recent results from \cite{ClasonKlassen18}.
Moreover, we emphasize the fact that due to the non-additive structure of regularization here, the analysis done so far (e.g. \cite{BakKok04,KalNeuSch08,SKHK12}) does not apply here. For the same reason, we also did not find a possibility to extend the Levenberg-Marquardt method (e.g., \cite{HankeLevMar,JinLevMar}) to the Ivanov regularized setting.

\revmarg{1, 8.}
\revmarg{2, 1.}
\begin{algorithm}
\caption{Ivanov-Iteratively Regularized Gauss Newton\label{IvanovAlg}}
\begin{algorithmic}[1]
\State Choose constants $c_{tc}, \tau, \theta, \Theta$ according to \eqref{ctc}, \eqref{discprinc} and \eqref{theta}, respectively.
\revision{
\State Choose starting point $u_0$. Set $k=0$.
\If{$\|F(u_k^\delta)-g^\delta\|\leq\tau\delta$}
\State $u_{k_*(\delta)}^\delta := u_0$ 
\Else
\While{$\|F(u_k^\delta)-g^\delta\|>\tau\delta$}
\State compute minimizer $u_{k+1}^\delta(\rho)$ of \eqref{ivanov} according to the rule \eqref{choice} for $\rho$ 
\State $u_{k+1}^\delta := u_{k+1}^\delta(\rho)$
\State $k:=k+1$
\EndWhile
\State $u_{k_*(\delta)}^\delta := u_k^\delta$
\EndIf
}
\end{algorithmic}
\end{algorithm}

The remainder of this paper is organized as follows.
In Section \ref{sec:conv} we provide a convergence analysis in the sense of a regularization method, along with rates under variational source conditions, which is carried over to the discretized setting under certain accuracy requirements in Section \ref{sec:discr}. Section \ref{sec:numexp} is devoted to implementation details and numerical experiments. The final Section \ref{sec:concl} contains a summary and an outlook.

\section{Convergence analysis}\label{sec:conv}

For analyzing convergence of this method, we impose the following conditions
\begin{assumption}\label{ass1}
Assume that a solution $u^\dagger$ of \eqref{Fxg} exists such that $R:=\mathcal{R}(u^\dagger) < \infty$.\\
Moroever, let topologies $\mathcal{T}_{\mathcal{U}}$ on $\mathcal{U}$ and $\mathcal{T}_{\mathcal{Y}}$ on $\mathcal{Y}$ exist such that 
\begin{enumerate}
\item[(a)] $\mathcal{R}\geq0$ is proper, convex, and $\mathcal{T}_{\mathcal{U}}$-lower semicontinuous with $\mathcal{R}(u)=0 \ \Longleftrightarrow \ u=u_0$;
\item[(b)] for all $0 \leq r \leq\mathcal{R}(u^\dagger)$, the sublevel set $B_r$ defined by 
\begin{equation}\label{Br}
B_r=\{ u \in \mathcal{D}(F) : \mathcal{R}(u) \leq r \} 
\end{equation} 
is compact with respect to $\mathcal{T}_{\mathcal{U}}$;
\item[(c)] bounded sets in $\mathcal{Y}$ are $\mathcal{T}_{\mathcal{Y}}$-compact and the norm in $\mathcal{Y}$ is $\mathcal{T}_{\mathcal{Y}}$-lower semicontinuous;
\item[(d)] for all $u \in B_R$, the linear operator $F'(u)$ is $\mathcal{T}_{\mathcal{U}}$-to-$\mathcal{T}_{\mathcal{Y}}$ closed, i.e., for any sequence $(u_k)_{k\in\N}\subseteq \mathcal{U}$,
\[
\Bigl(u_k \stackrel{\mathcal{T}_{\mathcal{U}}}{\longrightarrow} \bar{u} \mbox{ and } F'(u)u_k\stackrel{\mathcal{T}_{\mathcal{Y}}}{\longrightarrow} g\Bigr)
\ \Longrightarrow \  F'(u)\bar{u}=g\,.
\]
\end{enumerate} 
\end{assumption}

In case of the regularization functional being defined by the norm on the space $\mathcal{U}$ \eqref{Rnorm}, compactness of sublevel sets typically holds in the sense of weak or weak* sequential compactness (if $\mathcal{U}$ is reflexive or the dual of a separable space, respectively) provided $\mathcal{D}(F)$ is closed with respect to this topology as well. For example, $\mathcal{U}$ may be a Lebesgue $L^p(\Omega)$ or Sobolev spaces $W^{s,p}(\Omega)$ with summability index $p\in(1,\infty]$, the space of regular Radon measures $\mathcal{M}(\Omega)=C_b(\Omega)^*$, or the space of functions with bounded total variation $BV(\Omega)$ for some domain $\Omega$.
Thus, the topologies $\mathcal{T}_{\mathcal{U}}$, $\mathcal{T}_{\mathcal{Y}}$ will typically be weak or weak* topologies, or possibly also strong topologies arising from compact embeddings.
\revision{
Note that in general, we do not make explicit use of any norm on $X$ here, but actually only use the $\mathcal{T}_{\mathcal{U}}$ from Assumption \eqref{ass1} and the structure induced by $\mathcal{R}$ bounded sets $B_r$.
}

As a constraint on the nonlinearity of the forward operator $F$, we impose the tangential cone condition
\begin{eqnarray}\label{ctc}
\|F(\bar{u})-F(u)-F'(u)(\bar{u}-u)\|\leq c_{tc}\|F(\bar{u})-F(u)\|, && \forall \bar{u},u \in B_R,
\end{eqnarray} for $c_{tc} < 1/3$. 
Here, $B_R$ is defined as in \eqref{Br} and 
\revision{
$R=\mathcal{R}(u^\dagger)$ as in Assumption \ref{ass1}.
}
Also, $F'(u)\in L(\mathcal{U},\mathcal{Y})$ is some linearization of $F$ (not necessarily a G\^{a}teaux or Fr\'{e}chet derivative of $F$; the only requirements on $F'$ are \eqref{ctc} and $F'(u)\in L(\mathcal{U},\mathcal{Y})$ for all $u \in B_R$.
\revision{
In order to verify \eqref{ctc} in case $\mathcal{R}$ in the definition of $B_R$ is defined by a norm \eqref{Rnorm}, one typically restricts the radius $R$ (which basically determines the size of the constant $c_{tc}$) to be small, which will usually be possible also here, assuming closeness of an exact solution $u^*$ to $u_0$. 
In case $R$ needs to be large, since $\mathcal{R}(u^*)$ is large, closeness of $\bar{u},u \in B_R$ to each other can be enforced by considering $\mathcal{R}$ as the sum of some regularization functional and the indicator function of a sufficiently small neighborhood (possibly with respect to a different topology than the one used for regularization, and independent of the regularization radius) around $u^*$.
}
\revmarg{2, 2.}

Moroever, we assume that the thresholds for the choice of $\rho_k$ are chosen such that 
\begin{equation}\label{theta}
0< c_{tc}+\frac{1+c_{tc}}{\tau} < \theta < \Theta < 1-2c_{tc} <1\,.
\end{equation}

\begin{theorem}\label{theo_conv}
Let Assumption \ref{ass1} as well as condition \eqref{ctc} on $F$ be satisfied. 
Moreover, consider a family of data $(g^\delta)_{\delta >0}$ satisfying \eqref{noise} and let, for each $\delta$, $g^\delta$, the stopping index be defined by \eqref{discprinc} with $\tau > \frac{1+c_{tc}}{1-3c_{tc}}$, 
\revision{
which enables a choice of $\theta$, $\Theta$ such that \eqref{theta} holds.
}
\revmarg{2, 3.}

\begin{enumerate}
\item[\revision{(i)}] For any 
$\rho\in(0,R]$,
and $k<k_*$, the iterate $u_{k+1}^\delta(\rho)$ is well defined by \eqref{ivanov};
\item[(ii)] 
\revision{
For any $k<k_*$, under condition \eqref{caseA}, and if $u_k^\delta\in B_R$, 
\begin{enumerate}
\item[(a)] the choice of $\rho_k$ according to \eqref{choice} is well defined  
\item[(b)] for any solution $u^*\in B_R$ of \eqref{Fxg}, the estimate
\begin{equation}\label{mon}
 \mathcal{R}(u_{k+1}^\delta) \leq \rho_k\leq\mathcal{R}(u^*).
\end{equation} holds. 
\item[(c)] the estimate 
\begin{equation}\label{contr}
\|F(u_{k+1}^\delta)-g^\delta\|\leq q \|F(u_k^\delta)-g^\delta\|
\end{equation}
holds with some  $q < 1$ independent of $k$ and $\delta$ (cf. \eqref{q}), 
\item[(d)] 
\eqref{caseA} is satisfied with $k$ replaced by $k+1$.
\end{enumerate}
In particular, for fixed $\delta$, $g^\delta$, by induction and the fact that \eqref{caseA} holds for $k=0$, condition \eqref{caseA} remains valid throughout the iteration, 
and the iterates according to Algorithm \ref{IvanovAlg} are well-defined and remain in $B_R$. 
}
\revmarg{1, 8.}
\revmarg{2, 1.}
\item[(iii)] The stopping index $k_*$ according to \eqref{discprinc} is finite.
\item[(iv)] We have $\mathcal{T}_{\mathcal{U}}$-subsequential convergence as $\delta \rightarrow 0$, i.e., $(u_{k_*(\delta,g^\delta)}^\delta)_{\delta >0}$ has a $\mathcal{T}_{\mathcal{U}}$-convergent subsequence and the limit of any $\mathcal{T}_{\mathcal{U}}$-convergence subsequence solves \eqref{Fxg}. If the solution $u^*$ of \eqref{Fxg} is unique in $B_R$, then $u_{k_*(\delta,g^\delta)}^\delta \stackrel{\mathcal{T}_{\mathcal{U}}}{\longrightarrow} u^* $ as $\delta \rightarrow 0$.
\end{enumerate}
\end{theorem}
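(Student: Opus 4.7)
The plan is to prove (i) by the direct method of the calculus of variations, to establish (ii) by a simultaneous induction in $k$ over parts (a)--(d), to derive (iii) as an immediate consequence of the contraction in (ii)(c), and to treat (iv) via compactness, the discrepancy bound, and the closedness hypothesis in Assumption \ref{ass1}(d). The base case \eqref{caseA} at $k=0$ is trivially valid because $F'(u_0)(u_0-u_0)=0$ and $\Theta<1$. The main obstacle will be (ii)(d), which has to be proved without tautologically invoking anything yet to be established at index $k+1$.

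For (i), the set $B_\rho$ is nonempty (it contains $u_0$) and $\mathcal{T}_{\mathcal{U}}$-compact by Assumption \ref{ass1}(b); the objective in \eqref{ivanov} is convex, and $\mathcal{T}_{\mathcal{U}}$-lower semicontinuity follows because along $u_n\to u$ in $\mathcal{T}_{\mathcal{U}}$ Assumption \ref{ass1}(c) yields a $\mathcal{T}_{\mathcal{Y}}$-convergent subsequence of $F'(u_k^\delta)u_n$ whose limit equals $F'(u_k^\delta)u$ by Assumption \ref{ass1}(d), and $\mathcal{T}_{\mathcal{Y}}$-lower semicontinuity of the $\mathcal{Y}$-norm completes the argument. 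For (ii)(a)--(c), the central object is
\[
V(\rho) := \min_{u\in B_\rho} \|F'(u_k^\delta)(u-u_k^\delta)+F(u_k^\delta)-g^\delta\|,
\]
which is nonincreasing in $\rho$ and continuous by results of \cite{ClasonKlassen18}. Hypothesis \eqref{caseA} reads $V(0)\geq \Theta\|F(u_k^\delta)-g^\delta\|$, while substituting $u=u^*$ together with \eqref{ctc} and $\|F(u_k^\delta)-g^\delta\|>\tau\delta$ yields
\[
V(\mathcal{R}(u^*))\leq \Bigl(c_{tc}+\frac{1+c_{tc}}{\tau}\Bigr)\|F(u_k^\delta)-g^\delta\|<\theta\|F(u_k^\delta)-g^\delta\|
\]
by \eqref{theta}. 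The intermediate value theorem then produces $\rho_k\in(0,\mathcal{R}(u^*)]$ satisfying \eqref{choice}, establishing (a) and (b). For (c), a triangle inequality with the upper bound in \eqref{choice} and \eqref{ctc} gives $(1-c_{tc})\|F(u_{k+1}^\delta)-g^\delta\|\leq(\Theta+c_{tc})\|F(u_k^\delta)-g^\delta\|$, so the contraction rate is $q:=(\Theta+c_{tc})/(1-c_{tc})<1$ thanks to $\Theta<1-2c_{tc}$.

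The critical step (ii)(d) I would handle by contradiction: if \eqref{caseA} fails at $k+1$, then applying \eqref{ctc} to the pair $(u_0,u_{k+1}^\delta)\in B_R\times B_R$ and expanding $F(u_0)-g^\delta$ via the triangle inequality sharpens the supposed strict inequality into $\|F(u_0)-g^\delta\|<q\|F(u_{k+1}^\delta)-g^\delta\|$. Iterating the already proven contraction (c) $k+1$ times gives $\|F(u_{k+1}^\delta)-g^\delta\|\leq q^{k+1}\|F(u_0)-g^\delta\|$, so $\|F(u_0)-g^\delta\|<q^{k+2}\|F(u_0)-g^\delta\|$, which contradicts $q<1$ because $\|F(u_0)-g^\delta\|>0$ (otherwise the while loop is skipped in Algorithm \ref{IvanovAlg}). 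This closes the simultaneous induction preserving both \eqref{caseA} and $u_k^\delta\in B_R$.

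Part (iii) is then immediate: iterating (c) yields $\|F(u_k^\delta)-g^\delta\|\leq q^k\|F(u_0)-g^\delta\|\to 0$, which is incompatible with $\|F(u_k^\delta)-g^\delta\|>\tau\delta>0$ for arbitrarily large $k$. For (iv), $\mathcal{T}_{\mathcal{U}}$-compactness of $B_R$ extracts a subsequential limit $\bar u\in B_R$ of $u_{k_*(\delta_n,g^{\delta_n})}^{\delta_n}$, while the discrepancy principle together with \eqref{noise} yields $F(u_{k_*}^{\delta_n})\to g$ in $\mathcal{Y}$-norm. Assumptions \ref{ass1}(c),(d) applied to $F'(\bar u)u_{k_*}^{\delta_n}$ give $F'(\bar u)(u_{k_*}^{\delta_n}-\bar u)\to 0$ in $\mathcal{T}_{\mathcal{Y}}$, so \eqref{ctc} at $\bar u$ combined with $\mathcal{T}_{\mathcal{Y}}$-lower semicontinuity of the norm produces an inequality of the form $\|h\|\leq c_{tc}\|h\|$ for $h:=g-F(\bar u)$, forcing $F(\bar u)=g$. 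Uniqueness of the solution in $B_R$ then upgrades subsequential to full $\mathcal{T}_{\mathcal{U}}$-convergence via the standard subsequence-of-subsequence argument.
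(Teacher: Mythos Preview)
Your proof is correct and largely parallels the paper. Parts (i), (ii)(a)--(c), and (iii) are essentially identical to the paper's argument (same value function, same use of \cite{ClasonKlassen18} for continuity and monotonicity, same contraction factor $q=(\Theta+c_{tc})/(1-c_{tc})$). Your treatment of (ii)(d) is the paper's direct estimate $\|F'(u_{k+1}^\delta)(u_0-u_{k+1}^\delta)+F(u_{k+1}^\delta)-g^\delta\|\geq((1-c_{tc})q^{-1}-c_{tc})\|F(u_{k+1}^\delta)-g^\delta\|\geq\Theta\|F(u_{k+1}^\delta)-g^\delta\|$ recast contrapositively; the two are logically equivalent.

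The one genuine difference is part (iv). The paper extracts a $\mathcal{T}_{\mathcal{Y}}$-convergent subsequence of $F(u^l)$ and identifies its limit with $F(\bar u)$, effectively relying on a $\mathcal{T}_{\mathcal{U}}$-to-$\mathcal{T}_{\mathcal{Y}}$ closedness property of $F$ itself. You instead exploit the tangential cone condition at $\bar u$ together with Assumption~\ref{ass1}(d) for the \emph{linearization} $F'(\bar u)$: boundedness of $F'(\bar u)(u^l-\bar u)$ follows from \eqref{ctc} and the norm convergence $F(u^l)\to g$, closedness of $F'(\bar u)$ forces the $\mathcal{T}_{\mathcal{Y}}$-limit to be zero, and then lower semicontinuity plus \eqref{ctc} collapses to $\|g-F(\bar u)\|\leq c_{tc}\|g-F(\bar u)\|$. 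This route stays strictly within the stated hypotheses (Assumption~\ref{ass1} only postulates closedness of $F'(u)$, not of $F$), at the price of a slightly more delicate limit passage. One small point you glossed over: the boundedness of $F'(\bar u)u^l$ needed to invoke Assumption~\ref{ass1}(c) is not automatic from $u^l\in B_R$ alone, but does follow from \eqref{ctc} as just indicated; it would be worth making that explicit.
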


\begin{proof}

The existence  \revision{(i)} of a minimizer $u_{k+1}^\delta (\rho)$ of \eqref{ivanov} for fixed $k$, $u_k^\delta$, $g^\delta$ and $\rho$ follows by the direct method of calculus of variations (note that the setting differs from the one in \cite{KaltenbacherPreviatti18} in that we do not assume admissibility of $u^*$ here):
The cost functional
\begin{equation}\label{cost}
J_k(u)=\|F'(u_k^\delta)(u-u_k^\delta)+F(u_k^\delta)-g^\delta\|
\end{equation} is bounded from below and the admissible set $\mathcal{U}^{ad} = B_\rho$ is nonempty (this follows from the fact that $\rho > 0$ and $u_0 \in \mathcal{U}^{ad}$). Hence, there is a minimizing sequence $(u^l)_{l\in\mathbb{N}} \subseteq \mathcal{U}^{ad}$ with $\lim_{l\rightarrow \infty} J_k(u^l)=\inf_{u\in \mathcal{U}^{ad}} J_k(u)$.
By $\mathcal{T}_{\mathcal{U}}$-compactness of $B_\rho$, 
the sequence $(u^l)_{l\in\mathbb{N}}$ has a $\mathcal{T}_{\mathcal{U}}$-convergent subsequence $(u^{l_n})_{n\in\mathbb{N}}$ with limit $\bar{u}\in B_\rho$.
Boundedness of $J_k(u^l)$ yields $\mathcal{T}_{\mathcal{Y}}$ convergence of another (not relabelled) subsequence of 
$F'(u_k^\delta)u^{l_n}$ to some $g\in \mathcal{Y}$, which by the assumed $\mathcal{T}_{\mathcal{U}}$-to-$\mathcal{T}_{\mathcal{Y}}$ closedness of $F'(u_k^\delta)$ coincides with $F'(u_k^\delta)\bar{u}$. 
$\mathcal{T}_{\mathcal{Y}}$-lower semicontinuity of the norm in $\mathcal{Y}$ yields
\begin{equation*}
J_k(\bar{u}) \leq \liminf_{n\rightarrow \infty} J_k(u^{l_n}) = \inf_{u\in\mathcal{U}^{ad}}J_k(u) \mbox{ and } \bar{u}\in \mathcal{U}^{ad},
\end{equation*}
which implies that $\bar{u}$ is a minimizer of \eqref{ivanov}.

For proving \revision{(ii)}, fix $k$ such that $\|F(u_k^\delta)-g^\delta\| > \tau\delta$ and 
\revision{
assume \eqref{caseA} to hold. 
}
\revmarg{1, 8.}
\revmarg{2, 1.}
Defining $d(\rho,g):=\min\{\|F'(u_k^\delta)u-g\|\, :\, u\in B_\rho\}$ and using the notation $\bar{g}^\delta_k := g^\delta+F'(u_k^\delta)u_k^\delta-F(u_k^\delta)$, we have
\begin{equation*}
d(0,\bar{g}_k^\delta)=\|F'(u^\delta_k)(u_0-u_k^\delta)+F(u_k^\delta)-g^\delta\| \geq \Theta \|F(u_k^\delta)-g^\delta\|.
\end{equation*}
On the other hand, by \eqref{ctc}, we have, for any solution $u^*\in B_R$ of \eqref{Fxg},
\begin{eqnarray}\nonumber
d(\mathcal{R}(u^*),\bar{g}_k^\delta)&\leq&\|F'(u^\delta_k)(u^*-u_k^\delta)+F(u_k^\delta)-g^\delta\| \\
&\leq& c_{tc}\|F(u_k^\delta)-g^\delta\|+(1+c_{tc})\delta\nonumber\\
&\leq& \left(c_{tc}+\frac{1+c_{tc}}{\tau}\right)\|F(u_k^\delta)-g^\delta\| \label{m1}
\leq \theta \|F(u_k^\delta)-g^\delta\|.
\end{eqnarray} 
Thus, using continuity of the distance mapping $\rho \mapsto d(\rho,g)$ for 
\revision{
$\rho\in [0,\mathcal{R}(x^*)]$,
}
\revmarg{2, 12.}
see \cite{ClasonKlassen18}, together with the Intermediate Value Theorem, we have existence of 
\begin{equation} \label{rhokin0Ru}
\rho=\rho_k \in [0,\mathcal{R}(u^*)]
\end{equation} 
such that \eqref{choice} holds.
\revision{
To show that \eqref{rhokin0Ru} holds for any $\rho_k$ satisfying \eqref{choice},
}
\revmarg{1, 2.}
observe that the lower bound in \eqref{choice} means that
\begin{equation}\label{m2}
d(\rho_k,\bar{g}^\delta_k) \geq \theta\|F(u_k^\delta)-g^\delta\|,
\end{equation} 
hence by the monotone decrease of $\rho \rightarrow d(\rho,g)$,  (cf. \cite{ClasonKlassen18},) a combination of \eqref{m1} and \eqref{m2} implies
\begin{equation*}
d(\mathcal{R}(u^*),\bar{g}^\delta_k)\leq d(\rho_k,\bar{g}_k^\delta) \Rightarrow \mathcal{R}(u^*) \geq \rho_k\,.
\end{equation*} 
Thus, \eqref{mon} holds and therefore the iterates remain in $B_R$.

The residuals can be estimated as follows
\begin{eqnarray}\nonumber
&& (1-c_{tc})\|F(u_{k+1}^\delta)-g^\delta\|-c_{tc}\|F(u_k^\delta)-g^\delta\| \leq \\
\label{con}
&&\|F'(u^\delta_k)(u_{k+1}^\delta-u_k^\delta)+F(u_k^\delta)-g^\delta\| \leq \Theta \|F(u_k^\delta)-g^\delta\|, 
\end{eqnarray} 
which implies
\begin{equation*}
\|F(u_{k+1}^\delta)-g^\delta\| \leq q\|F(u^\delta_k)-g^\delta\|, 
\end{equation*}
where 
\begin{equation}\label{q}
q=\dfrac{\Theta +c_{tc}}{1-c_{tc}} <1.
\end{equation} 
\revision{
Thus we have $\|F(u_{k+1}^\delta)-g^\delta\|\leq q\|F(u_0)-g^\delta\|$ with $q$ as in \eqref{q}.
Using \eqref{ctc} and \eqref{q} we therefore get
\begin{equation}\label{caseAkp1}
\begin{aligned}
&\|F'(u_{k+1}^\delta)(u_0-u_{k+1}^\delta)+F(u_{k+1}^\delta)-g^\delta\|\\
&\geq\|F(u_0)-g^\delta\|-\|F'(u_{k+1}^\delta)(u_0-u_{k+1}^\delta)+F(u_{k+1}^\delta)-F(u_0)\|\\
&\geq \|F(u_0)-g^\delta\|-c_{tc}\|F(u_0)-F(u_{k+1}^\delta)\|\\
&\geq (1-c_{tc})\|F(u_0)-g^\delta\|-c_{tc}\|F(u_{k+1}^\delta)-g^\delta\|\\
&\geq ((1-c_{tc})q^{-1} -c_{tc})\|F(u_{k+1}^\delta)-g^\delta\|
\geq\Theta\|F(u_{k+1}^\delta)-g^\delta\|.
\end{aligned}
\end{equation} 
}
\revmarg{1, 8.}
\revmarg{2, 1.}

To see (iii), observe that from \eqref{contr} it follows that $\|F(u_k^\delta)-g^\delta\| <\tau\delta$ as soon as  
\begin{equation*}
k \geq (\log 1/q)^{-1}\left(\log \|F(u_0)-g^\delta\| -\log \tau \delta \right) =: \bar{k}_*(\delta) \geq k_*(\delta, g^\delta),
\end{equation*}
hence the stopping index defined by \eqref{discprinc} is indeed finite. 

Now $(iv)$ follows from $(ii)$ by standard arguments and our assumption on $\mathcal{T}_{\mathcal{U}}$-compactness of $B_R$.
In fact, let $(\delta_n)_{n\in\mathbb{N}}$ be an arbitrary sequence converging to zero. By \eqref{discprinc} and the fact that $0 \leq \mathcal{R}(u^{\delta_n}_{k_*(\delta_n ,g^{\delta_n})}) < R$, Assumption 1 yields existence of a $\mathcal{T}_{\mathcal{U}}$-convergent subsequence $(u^l)_{l\in\mathbb{N}}:=(u^{\delta_{n_l}}_{k_*(\delta_{n_l},g^{\delta_{n_l}})})_{l\in\mathbb{N}}$ with limit $\bar{u}$, satisfying $\mathcal{R}(\bar{u}) \leq \liminf_{l\rightarrow\infty}\mathcal{R}(u^l)$. Using \eqref{noise} and \eqref{discprinc}, we have
existence of a (not relabeled) subsequence $F(u^l)$ with $\mathcal{T}_{\mathcal{Y}}$ limit $y$, which by Assumption \ref{ass1} (c) coincides with $F(\bar{u})$, hence
\begin{equation*}
\|F(\bar{u})-g\|\leq \liminf_{l\rightarrow \infty} \|F(u^l)-g\|\leq \liminf_{l\rightarrow \infty}(\|F(u^l)-g^{\delta_{n_l}}\|+\delta_{n_l})=0.
\end{equation*}
Thus, $\bar{u}$ solves \eqref{Fxg}.

In case of uniqueness, convergence of the whole sequence follows by a subsequence-subsequence argument.
\end{proof}

While Theorem \ref{theo_conv} only gives weak (i.e., $\mathcal{T}_{\mathcal{U}}$-subsequential) convergence for general regularization functionals, more can be said in the special but practically relevant case that $\mathcal{R}$ is defined by a norm.

\begin{proposition} \label{prop:convR}
Under the assumptions of Theorem \ref{theo_conv}, with $\mathcal{R}$ defined by a norm \eqref{Rnorm}, we have, for $u_{k_*{(\delta,g^\delta)}}^\delta$ defined by Algorithm \ref{IvanovAlg}, that 
\begin{eqnarray}\label{convR}
\mathcal{R}(u_{k_*{(\delta,g^\delta)}}^\delta)&\rightarrow &
\inf\{\mathcal{R}(u^*)\, : \, u^*\in B_R \mbox{ solves \eqref{Fxg}} \}\\
&=&\min\{\mathcal{R}(u^*)\, : \, u^*\in B_R \mbox{ solves \eqref{Fxg}} \}=:R^*
\mbox{ as }\delta \rightarrow 0.\nonumber
\end{eqnarray} 
Hence, if $\mathcal{R}$ is defined as the norm in a Kadets-Klee space, and $\mathcal{T}_{\mathcal{U}}$ is the corresponding 
\revision{
weak
}
\revmarg{1, 4.}
topology, we even have (subsequential) norm convergence in place of $\mathcal{T}_{\mathcal{U}}$ convergence of $u_{k_*(\delta,g^\delta)}^\delta$ to an $\mathcal{R}$ minimizing solution.
\end{proposition}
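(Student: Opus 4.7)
The plan is to leverage the a posteriori bound \eqref{mon} from Theorem \ref{theo_conv}(ii)(b), namely $\mathcal{R}(u_{k+1}^\delta)\leq\mathcal{R}(u^*)$ for every solution $u^*\in B_R$, together with the $\mathcal{T}_{\mathcal{U}}$-subsequential convergence from Theorem \ref{theo_conv}(iv) and the lower semicontinuity of $\mathcal{R}$. First I would verify that the infimum $R^*$ is attained, so that writing ``$\min$'' is justified: the solution set $S:=\{u\in B_R:F(u)=g\}$ is a $\mathcal{T}_{\mathcal{U}}$-closed subset of the $\mathcal{T}_{\mathcal{U}}$-compact set $B_R$ (closedness follows verbatim from the closing argument in the proof of Theorem \ref{theo_conv}(iv), now applied to a sequence of exact solutions instead of approximate ones), hence is itself $\mathcal{T}_{\mathcal{U}}$-compact; since $\mathcal{R}$ is $\mathcal{T}_{\mathcal{U}}$-lower semicontinuous and $S$ is nonempty by Assumption \ref{ass1}, the direct method yields a minimizer and thus $\inf=\min=R^*$.

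Next, \eqref{mon} applied at $k=k_*(\delta,g^\delta)-1$ (and trivially at $k_*=0$, where $\mathcal{R}(u_0)=0\leq R^*$) gives
\[
\mathcal{R}(u_{k_*(\delta,g^\delta)}^\delta)\leq\inf_{u^*\in S}\mathcal{R}(u^*)=R^*.
\]
For an arbitrary sequence $\delta_n\to0$, $\mathcal{T}_{\mathcal{U}}$-compactness of $B_R$ and Theorem \ref{theo_conv}(iv) yield a $\mathcal{T}_{\mathcal{U}}$-convergent subsequence whose limit $\bar u$ solves \eqref{Fxg}; lower semicontinuity of $\mathcal{R}$ then gives $\mathcal{R}(\bar u)\leq\liminf_{l\to\infty}\mathcal{R}(u_{k_*}^{\delta_{n_l}})\leq R^*$, so $\bar u\in S$ and hence $\mathcal{R}(\bar u)\geq R^*$. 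The resulting squeeze
\[
R^*\leq\mathcal{R}(\bar u)\leq\liminf_{l\to\infty}\mathcal{R}(u_{k_*}^{\delta_{n_l}})\leq\limsup_{l\to\infty}\mathcal{R}(u_{k_*}^{\delta_{n_l}})\leq R^*
\]
forces $\mathcal{R}(u_{k_*}^{\delta_{n_l}})\to R^*=\mathcal{R}(\bar u)$, and a standard subsequence-subsequence argument promotes this to $\mathcal{R}(u_{k_*(\delta,g^\delta)}^\delta)\to R^*$ as $\delta\to0$.

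For the Kadets-Klee statement, assume $\mathcal{T}_{\mathcal{U}}$ is the weak topology. Any weakly convergent sub-subsequence $u_{k_*}^{\delta_{n_l}}\rightharpoonup\bar u$ (with $\bar u$ an $\mathcal{R}$-minimizing solution, as just shown) also satisfies, via $\mathcal{R}(u)=\|u-u_0\|^p$ and the $\mathcal{R}$-convergence above, the norm convergence $\|u_{k_*}^{\delta_{n_l}}-u_0\|\to\|\bar u-u_0\|$; the Kadets-Klee property applied to the translated sequence then delivers $\|u_{k_*}^{\delta_{n_l}}-\bar u\|\to0$. I expect the only subtle point to be the attainment of $R^*$, i.e., the $\mathcal{T}_{\mathcal{U}}$-closedness of $S$ inside $B_R$; fortunately this is handled by precisely the same extraction-and-identification argument that closes the proof of Theorem \ref{theo_conv}(iv) (bounded image sequence, $\mathcal{T}_{\mathcal{Y}}$-compactness, lower semicontinuity of the $\mathcal{Y}$-norm), so no ingredients beyond Assumption \ref{ass1} are required.
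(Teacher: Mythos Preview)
Your argument is correct and follows essentially the same route as the paper: bound $\mathcal{R}(u_{k_*}^\delta)$ above by $\mathcal{R}(u^*)$ via \eqref{mon}, extract a $\mathcal{T}_{\mathcal{U}}$-convergent subsequence whose limit solves \eqref{Fxg}, use lower semicontinuity of $\mathcal{R}$ to squeeze, and finish with a subsequence--subsequence argument and the Kadets--Klee property. Two minor organizational differences are worth noting: the paper first invokes \cite[Corollary~2.6]{ClasonKlassen18} to show that the minimizer lies on the boundary, $\mathcal{R}(u_{k+1}^\delta)=\rho_k$, but this equality is not actually used in the subsequent estimates (only the inequality part of \eqref{mon} is needed), so your omission of it is a genuine simplification; and the paper obtains attainment of $R^*$ as a byproduct of the limit identification, whereas you establish it upfront via $\mathcal{T}_{\mathcal{U}}$-compactness of the solution set---both are fine, yours is arguably cleaner.
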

\revision{
A Kadets-Klee (also called Radon-Riesz) space is a normed space in which, for any sequence $(x_n)_{n\in\N}$, convergence of the norms $\|x_n\|\to\|x\|$ and weak convergence $x_n\rightharpoonup x$ implies strong convergence $\|x_n-x\|\to0$.}
\revmarg{2, 4.}

\begin{proof}
From 
\revision{
\cite[Corollary 2.6]{ClasonKlassen18}
} 
\revmarg{1, 5.}
we conclude that for $k<k_*$, the minimizer $u_{k+1}^\delta(\rho)$ lies on the boundary of the feasible set $\mathcal{R}(u_{k+1}^\delta(\rho_k))=\rho_k$, provided $\bar{g}_k^\delta \not\in F'(u_k^\delta)B_{\rho_k}$ holds. The latter can be easily verified by noting that if there exists $ u \in B_{\rho_k}$ such that $\bar{g}_k^\delta = F'(u_k^\delta)u$, then $\|F'(u_k^\delta)(u-u_k^\delta)+F(u_k^\delta)-g^\delta\|=0$ which contradicts \eqref{choice} and \eqref{discprinc} 
\revision{due to \eqref{caseA}}.
Thus, we have
\revision{
\begin{equation}\label{Rucases}
\mathcal{R}(u_{k+1}^\delta) = \rho_k \,.
\end{equation}
}
If there exists a subsequence $\delta_l \rightarrow 0$ such that for all $l\in\N$, the iteration is already stopped at  $k=0$, we have 
\[
\|F(u_{k_*(\delta_l,g^{\delta_l})}^{\delta_l})-g\|=\|F(u_0)-g\|\leq(\tau+1)\delta_l\to0\mbox{ as }l\to0\,,
\]
hence $u_0$ solves \eqref{Fxg} and 
\[
\begin{aligned}
&\inf\{\mathcal{R}(u^*)\, : \, u^*\in B_R \mbox{ solves \eqref{Fxg}} \}\\
&=\min\{\mathcal{R}(u^*)\, : \, u^*\in B_R \mbox{ solves \eqref{Fxg}} \}=\mathcal{R}(u_0)=R^*=0.
\end{aligned}
\]
This implies trivial convergence of the subsequence to $R^*$ in this case.

Otherwise, for any subsequence $\delta_l \rightarrow 0$ and any $m\in\N$, there exists $l_m\geq m$ such  that $k_*(\delta_{l_m},g^{\delta_{l_m}})\geq1$, hence in the next to last step \eqref{caseA} holds and by \eqref{Rucases},
\[
\mathcal{R}(u^{(m)}) = \rho_{k_*(\delta_{l_m},g^{\delta_{l_m}})-1}
\]
for 
\[
u^{(m)}:=u^{\delta_{l_m}}_{k_*(\delta_{l_m},g^{\delta_{l_m}})}\,.
\]
In this case, for any $u^*$ solving \eqref{Fxg}, by \eqref{mon} we also have 
\begin{equation}\label{mon1}
\forall m\in\N\, : \quad \mathcal{R}(u^{(m)})\leq \mathcal{R}(u^*)
\end{equation}
and by Assumption \ref{ass1}, there exists a solution $u^*=u^\dagger$ such that the right hand side is finite. 
Thus, again by Assumption \ref{ass1}, $(u^{(m)})_{m\in\N}$ has a $\mathcal{T}_{\mathcal{U}}$ convergent subsequence $(u^{(m_n)})_{n\in\N}$ with limit $\tilde{u}$, which, by $\mathcal{T}_{\mathcal{U}}$ closedness of $F$ and \eqref{discprinc}, solves \eqref{Fxg}. On the other hand, from $\mathcal{T}_{\mathcal{U}}$  lower semicontinuity of $\mathcal{R}$ we conclude
\begin{equation}\label{lsc}
\mathcal{R}(\tilde{u})\leq\liminf_{n\to\infty}\mathcal{R}(u^{(m_n)})\,.
\end{equation}
This together with \eqref{mon1} implies
\[
\mathcal{R}(\tilde{u})\leq\mathcal{R}(u^*) 
\]
i.e., since $u^*$ was an arbitrary solution of \eqref{Fxg}, 
\[
\inf\{\mathcal{R}(u^*)\, : \, u^*\in B_R \mbox{ solves \eqref{Fxg}} \}=
\min\{\mathcal{R}(u^*)\, : \, u^*\in B_R \mbox{ solves \eqref{Fxg}} \}=\mathcal{R}(\tilde{u}).
\]
Using again \eqref{lsc} and \eqref{mon1} with $u^*=\tilde{u}$, we end up with 
\[
\lim_{n\to\infty}\mathcal{R}(u^{(m_n)})=\mathcal{R}(\tilde{u})=R^*\,.
\]
A subsequence-subsequence argument yields the assertion.
\end{proof}

To obtain convergence rates in the Bregman distance with respect to $\mathcal{R}$
\begin{equation*}
D_\epsilon (\tilde{u},u)=\mathcal{R}(\tilde{u})-\mathcal{R}(u)-\langle \epsilon,\tilde{u}-u\rangle
\end{equation*}
for some $\epsilon$ in the subdifferential $\partial \mathcal{R}(u)$ we make use of a variational source condition (cf., e.g., \cite{BrediesLorenz09,Flemming11,Grasmair10,HKPS07,HofmannMathe12,HohageWeidling17})
\revision{at some solution $u^*\in B_R$ of \eqref{Fxg}
\begin{equation}\label{source}
\begin{aligned}
&\exists \epsilon^\dagger \in \partial\mathcal{R}(u^*)\, \exists \beta\in [0,1) \, \forall \tilde{u} \in B_{\mathcal{R}(u^*)} \ :\\
&
-\langle \epsilon^\dagger,\tilde{u}-u^*\rangle \leq \beta D_{\epsilon^\dagger}(\tilde{u},u^*)+\phi(\|F(\tilde{u})-F(u^*)\|)
\end{aligned}
\end{equation}
}
for some index function $\phi:\mathbb{R}^+\longrightarrow\mathbb{R}^+$ (i.e., $\phi$ monotonically increasing and $\lim_{t\rightarrow 0} \phi (t)=0$),
\revision{
which we assume to be nonempty for this purpose.
}
\revmarg{1, 6.}

\begin{corollary} \label{cor:rates}
Under the assumptions of Theorem \ref{theo_conv} and the variational source condition \eqref{source}, $u^\delta_{k_*(\delta,g^\delta)}$ satisfies the convergence rate
\begin{equation}\label{rate}
D_{\epsilon^\dagger}(u^\delta_{k_*(\delta,g^\delta)},u^*)\leq \frac{1}{1-\beta}\phi((\tau +1)\delta).
\end{equation}
\end{corollary}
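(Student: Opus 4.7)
The plan is a short chain of standard estimates built from the variational source condition, the monotonicity property \eqref{mon}, and the discrepancy stopping rule \eqref{discprinc}. No deep new work is required; the only thing to be careful about is checking admissibility of $u_{k_*(\delta,g^\delta)}^\delta$ in the source condition.

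First I would verify that $u_{k_*(\delta,g^\delta)}^\delta \in B_{\mathcal{R}(u^*)}$, so that \eqref{source} applies with $\tilde{u} = u_{k_*(\delta,g^\delta)}^\delta$. If $k_* \geq 1$ this is immediate from \eqref{mon} applied with $k = k_*-1$, which gives $\mathcal{R}(u_{k_*}^\delta) \leq \mathcal{R}(u^*)$; if $k_* = 0$ then $u_{k_*}^\delta = u_0$ and $\mathcal{R}(u_0) = 0$ by Assumption \ref{ass1}(a), so again admissibility holds. This same inequality $\mathcal{R}(u_{k_*}^\delta) \leq \mathcal{R}(u^*)$ then gives, directly from the definition of the Bregman distance,
\begin{equation*}
D_{\epsilon^\dagger}(u_{k_*}^\delta,u^*)
= \mathcal{R}(u_{k_*}^\delta) - \mathcal{R}(u^*) - \langle \epsilon^\dagger, u_{k_*}^\delta - u^*\rangle
\leq -\langle \epsilon^\dagger, u_{k_*}^\delta - u^*\rangle.
\end{equation*}

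Next I would apply the variational source condition \eqref{source} to the right-hand side and absorb the $\beta D_{\epsilon^\dagger}(u_{k_*}^\delta,u^*)$ term on the left, yielding
\begin{equation*}
(1-\beta)\, D_{\epsilon^\dagger}(u_{k_*}^\delta,u^*) \leq \phi(\|F(u_{k_*}^\delta) - F(u^*)\|).
\end{equation*}
Finally, using $F(u^*) = g$, the triangle inequality, the noise bound \eqref{noise}, and the discrepancy principle \eqref{discprinc} at the stopping index gives
\begin{equation*}
\|F(u_{k_*}^\delta) - F(u^*)\| \leq \|F(u_{k_*}^\delta) - g^\delta\| + \|g^\delta - g\| \leq (\tau+1)\delta,
\end{equation*}
and monotonicity of the index function $\phi$ then produces the claimed rate \eqref{rate}.

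The only genuine sticking point is the admissibility step: one must be sure that \eqref{mon} is applicable at index $k_*-1$, which in turn requires that condition \eqref{caseA} held throughout the iteration. This is exactly what part (ii) of Theorem \ref{theo_conv} established (via the inductive propagation argument shown in \eqref{caseAkp1}), so the corollary follows without further work.
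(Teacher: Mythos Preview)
Your proof is correct and follows essentially the same route as the paper's own argument: drop the nonpositive term $\mathcal{R}(u_{k_*}^\delta)-\mathcal{R}(u^*)$ via \eqref{mon}, apply the variational source condition \eqref{source}, absorb the $\beta$-term, and bound the residual using \eqref{discprinc} and \eqref{noise}. You are in fact slightly more careful than the paper in explicitly verifying the admissibility $u_{k_*}^\delta\in B_{\mathcal{R}(u^*)}$ (including the $k_*=0$ case) and in flagging the reliance on the inductive validity of \eqref{caseA}; the paper simply cites \eqref{mon} and \eqref{discprinc} and writes out the chain of inequalities.
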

\begin{proof}
This is a consequence of 
\revision{
\cite[Proposition 2.9]{KK17}
}
\revmarg{1, 7.}
\revmarg{2, 5.}
and \eqref{mon} as well as \eqref{discprinc}.
\revision{
For completeness of exposition, we provide the short proof here.
    \[
        \begin{aligned}
            &D_{\epsilon^\dagger}(u^\delta_{k_*(\delta,g^\delta)},u^*)
            =\mathcal{R}(u^\delta_{k_*(\delta,g^\delta)}) - \mathcal{R}(u^*)
            -\langle \epsilon^\dagger , u^\delta_{k_*(\delta,g^\delta)}-u^*\rangle\\
            &\leq \beta D_{\epsilon^\dagger}(u^\delta_{k_*(\delta,g^\delta)},u^*)+\phi(\|F(u^\delta_{k_*(\delta,g^\delta)})-g\|)\\
            &\leq \beta D_{\epsilon^\dagger}(u^\delta_{k_*(\delta,g^\delta)},u^*)+\phi((\tau+1)\delta)
        \end{aligned}
    \]
}
\end{proof}

\section{Convergence of discretized approximations} \label{sec:discr}

We now consider a discretized version for the actual numerical solution of  \eqref{ivanov} arising from restriction of the minimization to finite dimensional subspaces $X_h^k$ containing $u_0$ and leading to discretized iterates $u_{k,h}^\delta$ and an approximate version $F_h^k$ of the forward operator:
\begin{equation}\label{ivanovD}
\begin{aligned}
u_{k+1,h}^\delta(\rho) \in \mbox{argmin}_{u\in\mathcal{D}(F) \cap X_h^k} \|{F^k_h}'(u_{k,h}^\delta)(u-u_{k,h}^\delta)+F_h^k(u_{k,h}^\delta)-g^\delta\| 
\\
\mbox{ such that } \mathcal{R}(u)\leq \rho,
\end{aligned}
\end{equation} with stopping index $k_*=k_*(\delta,g^\delta)$ according to the discretized discrepancy principle
\begin{equation}\label{discprincD}
k_*=k_*(\delta,g^\delta)=\min \{k\in\mathbb{N}_0 : \|F_h^k(u^\delta_{k,h})-g^\delta\| \leq \tau \delta\},
\end{equation} for $\tau > \frac{1+c_{tc}}{1-3c_{tc}}$.
\revision{
Here the sub- and superscripts $h$ and $k$ indicate that ${F^k_h}$ and ${F^k_h}'$ are discrete approximations of $F$ and $F'$, respectively, obtianed, e.g., by finite element discretizations on computational grids that my differ from step to step. In particular, they may be coarse at the beginning of the Newton iteration and emerge by successive mesh refinement during the iteration. Note that ${F^k_h}'$ is not necessarily the derivative of ${F^k_h}$.
}
\revmarg{2, 6.}

The regularization parameter $\rho=\rho_{k,h}$ is chosen according to the following discretized version of \eqref{choice} (relying on actually computed quantities):
\begin{equation}\label{choiceD}
\begin{aligned}\tilde{\theta}\|F_h^k(u_{k,h}^\delta)-g^\delta\|\leq \|{F^k_h}'(u_{k,h}^\delta)(u_{k+1,h}^\delta(\rho_{k,h})-u_{k,h}^\delta)+F^k_h(u_{k,h}^\delta)-g^\delta\|\\
\leq \tilde{\Theta}\|F_h^k(u_{k,h}^\delta)-g^\delta\|,
\end{aligned}
\end{equation} 
for $\tilde{\theta} <\tilde{\Theta}$ provided 
\begin{equation}\label{caseAh}
\|{F^k_h}'(u_{k,h}^\delta)(u_0-u_{k,h}^\delta)+F_h^k(u_{k,h}^\delta)-g^\delta\|\ge\tilde{\Theta}\|F_h^k(u_{k,h}^\delta)-g^\delta\|;
\end{equation}
in this case we set $u_{k+1,h}^\delta := u_{k+1,h}^\delta (\rho_{k,h})$. 
\revision{
Again we will show that \eqref{caseAh} is always satisfied.
}
\revmarg{1, 8.}
\revmarg{2, 1.}

\revision{
The tangential cone condition can usually not be expected to be transferrable from the continuous to the discretized setting, as already the simple setting of ${F^k_h}=P^k_hF$, ${F^k_h}'=P^k_hF'$ with a projection operator $P^k_h$ shows, since the right hand side $c_{tc}\|F_h^k(\bar{u}_h)-F_h^k(u_h)\|=c_{tc}\|P^k_h(F(\bar{u}_h)-F(u_h))\|$ will usually be too weak to estimate the (projected) first order Taylor remainder.
This can also be seen from the fact that the adjoint range invariance condition $F'(\bar{u})=R_u^{\bar{u}} F'(u)$ (with some bounded linear operator $R_u^{\bar{u}}$ close to the identity), that is often used to verify the tangential cone condition, does not imply its projected version $P^k_hF'(\bar{u}_h)=\tilde{R}_{u_h}^{\bar{u}_h} P^k_h F'(u_h)$.
Thus, in order to be able to employ the continuous version \eqref{ctc}, 
}
\revmarg{2, 7.}
we also define the auxiliary continuous iterates (for an illustration, see \cite[Figure 1]{KKV14I}):
\begin{equation}\label{ivanovA}
\begin{aligned}
u_{k+1,a}^\delta(\rho) \in \mbox{argmin}_{u\in\mathcal{D}(F)} \|F'(u_{k,h}^\delta)(u-u_{k,h}^\delta)+F(u_{k,h}^\delta)-g^\delta\| \\
\mbox{ such that } \mathcal{R}(u)\leq \rho,
\end{aligned}
\end{equation} and the parameter $\rho=\rho_{k,a}$ is given by
\begin{equation}\label{choiceA}
\begin{aligned}
\hat{\theta}\|F(u_{k,h}^\delta)-g^\delta\|\leq \|F'(u_{k,h}^\delta)(u_{k+1}^\delta(\rho_{k,a})-u_{k,h}^\delta)+F(u_{k,h}^\delta)-g^\delta\|\\
\leq \hat{\Theta}\|F(u_{k,h}^\delta)-g^\delta\|,
\end{aligned}
\end{equation} 
provided \eqref{caseAh} above holds.

As we will show now, this still allows to prove closeness to some projection $P_h^k x^*$  (e.g., a metric one) of an exact solution $x^*$ onto the finite dimensional space $X_h^k$, provided certain accuracy requirements are met.
\revision{
Note that also in $P_h^k$, the discretization level may depend on $k$ and will typically get finer for increasing $k$ in order to enable convergence in the sense of condition \eqref{limsupRh} below.
}
\revmarg{2, 8.}

\begin{corollary} \label{cor:discr}
Let the assumptions of Theorem \ref{theo_conv} be satisfied and assume that the discretization error estimates
\begin{eqnarray}
&&\|F(u_{k+1,h}^\delta)-g^\delta\|-\|F(u_{k+1,a}^\delta) -g^\delta\| \leq  \eta_{k+1}\label{eta}\\
&&| \|F^k_h(u_{k,h}^\delta)-g^\delta\|-\|F(u_{k,h}^\delta) -g^\delta\| | \leq \xi_{k} \label{eps}\\
&&\|{F^k_h}'(u_{k,h}^\delta)(u_0-u_{k,h}^\delta)+F^k_h(u_{k,h}^\delta)-g^\delta\|\nonumber\\
&& \qquad-\|F'(u_{k,h}^\delta)(u_0-u_{k,h}^\delta)+F(u_{k,h}^\delta)-g^\delta\| |
\leq \gamma_{k}\label{gam}\\
&&
\|{F^k_h}'(u_{k,h}^\delta)(P_h^ku^*-u_{k,h}^\delta)+F^k_h(u_{k,h}^\delta)-g^\delta\|\nonumber\\
&& \qquad
-\|F'(u_{k,h}^\delta)(u^*-u_{k,h}^\delta)+F(u_{k,h}^\delta)-g^\delta\|
\leq  \zeta_{k} \label{alp} 
\end{eqnarray} 
for any solution $u^* \in B_R$ of \eqref{Fxg} hold with 
\begin{equation}\label{tee}
\begin{aligned}
&\eta_{k+1} \leq c_\eta \|F_h^k(u_{k,h}^\delta)-g^\delta\|, \quad 
\varepsilon_k \leq c_\xi \|F_h^k(u_{k,h}^\delta)-g^\delta\|, \\
&\gamma_k \leq c_\gamma\|F_h^k(u_{k,h}^\delta)-g^\delta\|, \quad 
\zeta_k \leq c_\zeta \|F^k_h(u_{k,h}^\delta)-g^\delta\|,
\end{aligned}
\end{equation}
for all $k \leq k_*(\delta,g^\delta)$ and sufficiently small constants $c_\eta, c_\xi, c_\gamma, c_\zeta > 0$, and 
\[
\begin{aligned}
&1\geq\tilde{\Theta}\geq \tilde{\theta}\geq \left(c_{tc}+\frac{1+c_{tc}}{\tau}\right)(1+c_\xi)+c_\zeta\\
&1\geq\hat{\Theta}\geq \hat{\theta}\geq c_{tc}+\frac{1+c_{tc}}{\tau(1-c_\xi)}\\
&\hat{\Theta}\leq\frac{\tilde{\Theta}-c_\gamma}{1+c_\xi}\,, \quad
\hat{\Theta} <\left(1-\frac{c_\eta}{1-c_\xi}\right)(1-c_{tc}) -c_{tc}\\
&\tilde{\Theta}< \left(\left(1-\frac{c_\eta}{1-c_\xi}\right)(1-c_{tc}) -c_{tc}\right)(1-c_\xi)-c_\gamma
\end{aligned}
\]
\revision{
and
\begin{equation}\label{condthetatilde}
\tilde{\Theta}\leq(1-c_\xi)\left(\frac{1-c_{tc}}{\hat{q}}-c_{tc}\right)-c_\gamma
\end{equation}
}
\revmarg{1, 8.}
\revmarg{2, 1.}
Then, the assertions \revision{(i)}-(iii) of Theorem \ref{theo_conv} remain valid for $u_{k_*(\delta,g^\delta),h}^\delta$ in place of $u_{k_*(\delta,g^\delta)}^\delta$, and \eqref{discprincD} in place of \eqref{discprinc}, as well as 
\begin{equation}\label{qhat}
\hat{q}=\frac{\max\left\{\hat{\Theta}\,,\ \frac{c_\gamma+\tilde{\Theta}}{1-c_\xi}\right\}
+c_{tc}}{1-c_{tc}}+\frac{c_\eta}{1-c_\xi}
<1
\end{equation}
in place of $q$ and $\mathcal{R}(P_h^k u^*)$ in place of  $\mathcal{R}(u^*)$.

If $\mathcal{R}(P_h^k u^*)$ is uniformly bounded then this implies $\mathcal{T}_{\mathcal{U}}$-subsequential convergence of 
\revision{
$u_{k_*(\delta,g^\delta),h}^\delta$
} 
\revmarg{2, 8.}
to a solution of \eqref{Fxg} as $\delta\to0$.
\end{corollary}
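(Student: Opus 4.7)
The proof parallels that of Theorem~\ref{theo_conv}, the technical novelty being that the analysis is routed through the auxiliary continuous iterates $u_{k+1,a}^\delta$ of \eqref{ivanovA},~\eqref{choiceA}: since the tangential cone condition \eqref{ctc} is assumed only for the continuous operator $F$, these auxiliary iterates act as a bridge between the discretized algorithm and the continuous analysis, while the discretization error estimates \eqref{eta}--\eqref{alp} together with the smallness assumptions \eqref{tee} supply the necessary comparisons. Existence of $u_{k+1,h}^\delta(\rho)$ (analog of~(i)) follows by the direct method exactly as in Theorem~\ref{theo_conv}, because $B_\rho\cap X_h^k$ is nonempty (it contains $u_0$) and $\mathcal{T}_{\mathcal{U}}$-compact, and the cost functional is $\mathcal{T}_{\mathcal{U}}$-to-$\mathcal{T}_{\mathcal{Y}}$ lower semicontinuous by Assumption~\ref{ass1}(c),(d).

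For the choice of $\rho_{k,h}$ (analog of~(ii)(a),(b)), I invoke the intermediate value theorem on the discrete distance map
\[
d_h(\rho,\bar{g}_{k,h}^\delta)=\inf\{\|{F^k_h}'(u_{k,h}^\delta)u-\bar{g}_{k,h}^\delta\|\,:\,u\in B_\rho\cap X_h^k\},
\]
whose continuity and monotonicity are inherited from \cite{ClasonKlassen18}. The lower bound at $\rho=0$ is just \eqref{caseAh}; the upper bound at $\rho=\mathcal{R}(P_h^k u^*)$ is obtained by testing with $u=P_h^k u^*$ and chaining \eqref{alp} (to pass from discretized to continuous), \eqref{ctc} applied to $u^*$ and $u_{k,h}^\delta\in B_R$, the stopping threshold $\delta<r_k/\tau$ with $r_k:=\|F_h^k(u_{k,h}^\delta)-g^\delta\|$, and \eqref{eps}; the result is $\leq\tilde\theta r_k$ under the smallness hypotheses on $\tilde\theta$ in the corollary. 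Monotonicity of $d_h$ then forces $\rho_{k,h}\leq\mathcal{R}(P_h^k u^*)$, which is the discretized counterpart of \eqref{mon}. The same IVT applied to the continuous distance with the parameter rule \eqref{choiceA} (the factor $\frac{1}{1-c_\xi}$ in the lower bound for $\hat\theta$ arises precisely from converting $\delta<r_k/\tau$ to a bound involving $\|F(u_{k,h}^\delta)-g^\delta\|$ via \eqref{eps}) yields existence of $\rho_{k,a}$.

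The contraction estimate (analog of~(ii)(c)) is the main technical obstacle and is also the source of the $\max$ in \eqref{qhat}. The first term arises by inserting the auxiliary iterate via \eqref{eta},
\[
\|F(u_{k+1,h}^\delta)-g^\delta\|\leq\|F(u_{k+1,a}^\delta)-g^\delta\|+\eta_{k+1},
\]
and applying the continuous tangential cone with the upper bound in \eqref{choiceA} to obtain $\|F(u_{k+1,a}^\delta)-g^\delta\|\leq\frac{\hat\Theta+c_{tc}}{1-c_{tc}}\|F(u_{k,h}^\delta)-g^\delta\|$. The second (which becomes binding when $\hat\Theta$ is large) comes from converting \eqref{choiceD} into a continuous estimate through \eqref{gam} and \eqref{eps}, producing the factor $\frac{c_\gamma+\tilde\Theta}{1-c_\xi}$ in place of $\hat\Theta$; taking the worse of the two paths produces the $\max$ in \eqref{qhat}. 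Translating between $\|F(u_{k,h}^\delta)-g^\delta\|$ and $r_k$ via \eqref{eps} absorbs $\eta_{k+1}$ into the additive term $\frac{c_\eta}{1-c_\xi}$, and the smallness conditions \eqref{tee} together with the stated bounds on $\tilde\theta,\tilde\Theta,\hat\theta,\hat\Theta$ enforce $\hat q<1$. The induction step for \eqref{caseAh} at $k+1$ is then an imitation of \eqref{caseAkp1}, using the contraction just established, \eqref{ctc}, and the discretization estimates (in particular \eqref{gam} and \eqref{eps}) to control the gap between discrete and continuous residuals at $u_0$.

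Finiteness of $k_*$ (analog of~(iii)) follows immediately from iterating the contraction and the stopping criterion \eqref{discprincD}, exactly as in Theorem~\ref{theo_conv}(iii). For the concluding subsequential convergence assertion, uniform boundedness of $\mathcal{R}(P_h^k u^*)$ combined with $\mathcal{R}(u_{k_*,h}^\delta)\leq\mathcal{R}(P_h^{k_*}u^*)$ keeps the iterates in a fixed $\mathcal{T}_{\mathcal{U}}$-compact sublevel set, so Assumption~\ref{ass1}(b) supplies a $\mathcal{T}_{\mathcal{U}}$-convergent subsequence whose limit solves \eqref{Fxg} by Assumption~\ref{ass1}(c),(d) and the vanishing of the residual forced by \eqref{discprincD}, as in Theorem~\ref{theo_conv}(iv).
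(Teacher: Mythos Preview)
Your proof is correct and follows essentially the same route as the paper: existence of minimizers by the direct method, well-definedness of $\rho_{k,h}$ and $\rho_{k,a}$ via the intermediate value theorem applied to the discrete and continuous distance maps (with the endpoint estimates coming from \eqref{caseAh}, \eqref{alp}, \eqref{ctc}, \eqref{eps} exactly as you indicate), contraction through the auxiliary iterate and the tangential cone condition, induction for \eqref{caseAh} via the analog of \eqref{caseAkp1}, finiteness of $k_*$, and subsequential convergence by compactness.

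One small inaccuracy worth flagging: your explanation of the second branch of the $\max$ in \eqref{qhat} invokes \eqref{gam} to convert the upper bound in \eqref{choiceD} into a continuous estimate, but \eqref{gam} only compares the discrete and continuous linearized residuals \emph{at the point $u_0$}, not at $u_{k+1,h}^\delta$, so that conversion does not go through as stated. In the paper's proof only the $\hat{\Theta}$-branch is actually derived (via the upper bound in \eqref{choiceA} and the auxiliary iterate, see \eqref{est_qhat}), yielding the contraction factor $\tfrac{\hat{\Theta}+c_{tc}}{1-c_{tc}}+\tfrac{c_\eta}{1-c_\xi}$. Since the imposed constraint $\hat{\Theta}\leq\tfrac{\tilde{\Theta}-c_\gamma}{1+c_\xi}$ automatically forces $\hat{\Theta}\leq\tfrac{c_\gamma+\tilde{\Theta}}{1-c_\xi}$, the $\max$ in \eqref{qhat} is just a conservative upper bound on this factor; no separate second path is required for the argument to close.
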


\begin{proof}
First of all note that \eqref{eps}, \eqref{tee} imply 
\begin{equation}\label{estFFhk}
\begin{aligned}
&\|F(u_{k,h}^\delta)-g^\delta\|\leq(1+c_\xi) \|F_h^k(u_{k,h}^\delta)-g^\delta\|\\ \mbox{ and }
&\|F_h^k(u_{k,h}^\delta)-g^\delta\|\leq \frac{1}{1-c_\xi}\|F(u_{k,h}^\delta)-g^\delta\|\,.
\end{aligned}
\end{equation}

The existence of minimizers $u_{k+1,h}^\delta(\rho_{k,h})$ and $u_{k+1,a}^\delta(\rho_{k,a})$ of \eqref{ivanovD} and \eqref{ivanovA} for fixed $k, h, u_{k,h}^\delta, g^\delta$ and $\rho_{k,a}, \rho_{k,h}$ follows from the direct method of calculus of variations:
 
Fix $k$ such that $\|F^k_h(u_{k,h}^\delta)-g^\delta\| > \tau\delta$ and 
\revision{
assume \eqref{caseAh} to hold.
}
\revmarg{1, 8.}
\revmarg{2, 1.}
Similarly to the continuous setting, defining $d_h(\rho,g):=\min\{\|{F^k_h}'(u_{k,h}^\delta)u_h-g\|: u_h\in B_\rho\cap X_h^k\}$ and using $\bar{g}_{k,h}^\delta:= g^\delta +{F^k_h}'(u_{k,h}^\delta)u_{k,h}^\delta -F^k_h(u_{k,h}^\delta)$, we conclude 
$d_h(0,\bar{g}_{k,h}^\delta) \geq \tilde{\Theta} \|F^k_h(u_{k,h}^\delta)-g^\delta\|$, 
\revision{
by \eqref{caseAh}.
}
\revmarg{1, 8.}
\revmarg{2, 1.}
On the other hand, from the discretization error estimates \eqref{eta}--\eqref{tee}, for any solution $u^* \in B_R$ of \eqref{Fxg}, using \eqref{m1} we get
\begin{equation}\label{estdhust}
\hspace*{-0.5cm}\begin{aligned}
&d_h(\mathcal{R}(P_h^k u^*),\bar{g}_{k,h}^\delta) \leq  \|F'(u_{k,h}^\delta)(u^*-u_{k,h}^\delta)+F(u_{k,h}^\delta)-g^\delta\|+\zeta_k\\
&\leq\left(\left(c_{tc}+\frac{1+c_{tc}}{\tau}\right)(1+c_\xi)+c_\zeta\right)\|F_h^k(u_{k,h}^\delta)-g^\delta\|
\leq\tilde{\theta}\|F_h^k(u_{k,h}^\delta)-g^\delta\|\,.
\end{aligned}
\end{equation}
Hence, again by the continuity of the distance mapping $\rho \mapsto d_h(\rho,\bar{g}_{k,h}^\delta)$ for $\rho \in [0,\|\bar{g}_{k,h}^\delta\| ]$ together with the Intermediate Value Theorem, we have existence of $\rho_{k,h} \in [0,\mathcal{R}(u^*)]$ such that \eqref{choiceD} holds. 
Also, from \eqref{estdhust} and from the fact that the lower bound in \eqref{choiceD} means $d_h(\rho_{k,h},\bar{g}_{k,h}^\delta) \geq \tilde{\theta}\|F^k_h(u_{k,h}^\delta)-g^\delta\|$ we conclude $d_h(\mathcal{R}(P_h^ku^*),\bar{g}_{k,h}^\delta) \leq d_h(\rho_{k,h},\bar{g}_{k,h}^\delta)$, which, together with the monotone decrease of $\rho \rightarrow d_h(\rho,\bar{g}_{k,h}^\delta)$ implies $\mathcal{R}(P_h^ku^*)\geq \rho_{k,h}\geq\mathcal{R}(u_{k+1,h}^\delta)$.

Well-definedness of $\rho_{k,a}$ according to \eqref{choiceA} 
\revision{
under condition \eqref{caseAh}.
}
\revmarg{1, 8.}
\revmarg{2, 1.}
can as well be concluded like in the continuous case (with $u_k^\delta$ replaced by $u_{k,h}^\delta$), using the fact that by the given error estimates, for $d(\rho,g):=\min\{\|F(u_{k,h}^\delta)u-g\|: u\in B_\rho\}$ and $\bar{g}_{k}^\delta=g^\delta +F'(u_{k,h}^\delta)u_{k,h}^\delta -F(u_{k,h}^\delta)$
\[
\begin{aligned}
&d(0,\bar{g}_{k}^\delta) =\|F'(u_{k,h}^\delta)(u_0-u_{k,h}^\delta)+F(u_{k,h}^\delta)-g^\delta\|
\geq (\tilde{\Theta}-c_\gamma) \|F^k_h(u_{k,h}^\delta)-g^\delta\|\\
&\geq \frac{\tilde{\Theta}-c_\gamma}{1+c_\xi} \|F(u_{k,h}^\delta)-g^\delta\|
\geq \hat{\Theta} \|F(u_{k,h}^\delta)-g^\delta\|
\end{aligned}
\]
\revision{
due to \eqref{caseAh}.
}
\revmarg{1, 8.}
\revmarg{2, 1.}
and 
\[
\begin{aligned}
&d(\mathcal{R}(u^*),\bar{g}_{k}^\delta)\leq \|F'(u_{k,h}^\delta)(u^*-u_{k,h}^\delta)+F(u_{k,h}^\delta)-g^\delta\|\\
&\leq c_{tc}\|F(u_{k,h}^\delta)-F(u^*)\|+\delta\leq\left(c_{tc}+\frac{1+c_{tc}}{\tau(1-c_\xi)}\right)\|F(u_{k,h}^\delta)-g^\delta\|\\
&\leq \hat{\theta}\|F(u_{k,h}^\delta)-g^\delta\|\,.
\end{aligned}
\]

In order to obtain geometric decay of the residuals like in \eqref{contr} for the discretized version, note that for the auxiliary sequence defined by \eqref{ivanovA}, \eqref{choiceA}, we have by \eqref{ctc} and the discretization error estimates,
\begin{equation}\label{est_qhat}
\begin{aligned}
&\hat{\Theta} \|F(u_{k,h}^\delta)-g^\delta\|\geq \|F'(u_{k,h}^\delta)(u_{k+1,a}^\delta-u_{k,h}^\delta)+F(u_{k,h}^\delta)-g^\delta\|\\
&\geq(1-c_{tc})\|F(u_{k+1,a}^\delta)-g^\delta\|-c_{tc}\|F(u_{k,h}^\delta)-g^\delta\|\\ 
&\geq(1-c_{tc})\left(\|F(u_{k+1,h}^\delta)-g^\delta\|-\frac{c_\eta}{1-c_\xi}\|F(u_{k,h}^\delta)-g^\delta\|\right)\\&-c_{tc}\|F(u_{k,h}^\delta)-g^\delta\|\,,
\end{aligned}
\end{equation}
thus we get linear decay
\begin{equation}\label{contr_qhat}
\|F(u_{k+1,h}^\delta)-g^\delta\| \leq \hat{q} \|F(u_{k,h}^\delta) -g^\delta\|,
\end{equation}
with $\hat{q}$ as in \eqref{qhat}.

\revision{
To see that \eqref{caseAh} remains valid in the next step of the discretized version, provided, additionally \eqref{condthetatilde} holds, note that in the proof of the respective part of Theorem \ref{theo_conv}, we just have to replace $u_k^\delta$ by $u_{k,h}^\delta$, $F$ by $F_h^k$, $q$ by $\hat{q}$, and $\Theta$ by $(1-c_\xi)\hat{\Theta}-c_\gamma$, and \eqref{caseAkp1} by  
\[
\begin{aligned}
&\|{F_h^{k+1}}'(u_{{k+1},h}^\delta)(u_0-u_{{k+1},h}^\delta)+F_h^{k+1}(u_{{k+1},h}^\delta)-g^\delta\|\\
&\geq\|F'(u_{{k+1},h}^\delta)(u_0-u_{{k+1},h}^\delta)+F(u_{{k+1},h}^\delta)-g^\delta\|-c_\gamma\|F_h^{k+1}(u_{{k+1},h}^\delta)-g^\delta\|\\
&\geq \left(\frac{1-c_{tc}}{\hat{q}}-c_{tc}\right)\|F(u_{{k+1},h}^\delta)-g^\delta\|-c_\gamma\|F_h^{k+1}(u_{{k+1},h}^\delta)-g^\delta\|\\
&\geq \left((1-c_\xi)\left(\frac{1-c_{tc}}{\hat{q}}-c_{tc}\right)-c_\gamma\right)\|F_h^{k+1}(u_{{k+1},h}^\delta)-g^\delta\|
\ \geq \tilde{\Theta} \|F_h^{k+1}(u_{{k+1},h}^\delta)-g^\delta\|\,.
\end{aligned} 
\]
}
\revmarg{1, 8.}
\revmarg{2, 1.}

By means of \eqref{estFFhk}, we conclude from \eqref{contr_qhat}
\begin{equation}\label{10}
(1-c_\xi)\|F^k_h(u_{k,h}^\delta)-g^\delta\| \leq \hat{q}^k \|F(u_0)-g^\delta\|.
\end{equation}
Therefore, we have the following estimate showing that the stopping index defined by \eqref{discprincD} is finite, i.e., $\|F^k_h(u_{k,h}^\delta)-g^\delta\|$ falls below $\tau\delta$ as soon as
\begin{equation}
k \geq (\log 1/\hat{q})^{-1}(\log\|F(u_0)-g^\delta\|-\log(1-c_\xi)-\log\tau -\log\delta) =: \bar{k}_*(\delta) \geq k_*(\delta,g^\delta).
\end{equation}

\end{proof}

Also Proposition \ref{prop:convR} carries over to the discretized setting as follows.

\begin{proposition} \label{prop:convRh}
Under the assumptions of Corollary \ref{cor:discr}, assuming additionally 
\begin{equation}\label{limsupRh}
\limsup_{\delta\to0}\mathcal{R}(P_h^{k_*(\delta,g^\delta)-1} u^*)\leq \mathcal{R}(u^*)
\end{equation}
for any solution $u^*\in B_R$ of \eqref{Fxg},
with $\mathcal{R}$ defined by a norm \eqref{Rnorm}, we have, for $u_{k_*{(\delta,g^\delta)_h}}^\delta$ defined by the discretized version of Algorithm \ref{IvanovAlg} (replacing \eqref{ivanov}, \eqref{discprinc}, \eqref{choice} by \eqref{ivanovD}, \eqref{discprincD}, \eqref{choiceD}), that 
\begin{eqnarray}\label{convRh}
\mathcal{R}(u_{k_*{(\delta,g^\delta)},h}^\delta)&\rightarrow &
\inf\{\mathcal{R}(u^*)\, : \, u^*\in B_R \mbox{ solves \eqref{Fxg}} \}\\
&=&\min\{\mathcal{R}(u^*)\, : \, u^*\in B_R \mbox{ solves \eqref{Fxg}} \}=:R^*
\mbox{ as }\delta \rightarrow 0.\nonumber
\end{eqnarray} 

Hence, if $\mathcal{R}$ is defined as the norm in a Kadets-Klee space, and $\mathcal{T}_{\mathcal{U}}$ is the corresponding norm topology, we even have (subsequential) norm convergence in place of $\mathcal{T}_{\mathcal{U}}$ convergence of $u_{k_*(\delta,g^\delta),h}^\delta$ to an $\mathcal{R}$ minimizing solution.
\end{proposition}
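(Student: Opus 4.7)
The plan is to mirror the proof of Proposition \ref{prop:convR}, replacing each continuous ingredient by its discretized counterpart furnished by Corollary \ref{cor:discr}, and to absorb the discretization in the bound on $\mathcal{R}(u^{(m)})$ through the hypothesis \eqref{limsupRh}.

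First, I would establish that $\mathcal{R}(u_{k+1,h}^\delta) = \rho_{k,h}$ for $k < k_*$. This is the discretized analogue of the boundary assertion in the proof of Proposition \ref{prop:convR}: by \cite[Corollary 2.6]{ClasonKlassen18} applied to the convex minimization \eqref{ivanovD} over $B_\rho \cap X_h^k$, the constraint is active as long as $\bar{g}_{k,h}^\delta \notin {F^k_h}'(u_{k,h}^\delta)(B_{\rho_{k,h}} \cap X_h^k)$. Feasibility of $\bar{g}_{k,h}^\delta$ in the image would force the middle expression in \eqref{choiceD} to vanish, contradicting $\|F_h^k(u_{k,h}^\delta) - g^\delta\| > \tau\delta$, which holds since $k < k_*$ under \eqref{caseAh}.

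Next, I would separate the trivial branch. If some subsequence $\delta_l \to 0$ has $k_*(\delta_l, g^{\delta_l}) = 0$, then $\|F_h^0(u_0) - g^{\delta_l}\| \le \tau\delta_l$; combined with \eqref{eps}, \eqref{tee}, and \eqref{noise} this implies $\|F(u_0) - g\| \to 0$, so $u_0$ solves \eqref{Fxg} and, since Assumption \ref{ass1}(a) forces $\mathcal{R}(u_0) = 0$, we have $R^* = 0$ and the subsequence converges trivially. Otherwise, for any null sequence $\delta_l \to 0$ I would select $l_m \ge m$ with $k_*(\delta_{l_m}, g^{\delta_{l_m}}) \ge 1$, write $u^{(m)} := u^{\delta_{l_m}}_{k_*(\delta_{l_m}, g^{\delta_{l_m}}),h}$, and combine the boundary relation of the previous paragraph (applied at step $k_* - 1$) with the discretized monotonicity $\rho_{k,h} \le \mathcal{R}(P_h^k u^*)$ from Corollary \ref{cor:discr} to obtain, for every solution $u^* \in B_R$ of \eqref{Fxg},
\[
\mathcal{R}(u^{(m)}) \;=\; \rho_{k_*(\delta_{l_m},g^{\delta_{l_m}})-1,h} \;\le\; \mathcal{R}\bigl(P_h^{k_*(\delta_{l_m},g^{\delta_{l_m}})-1} u^*\bigr).
\]
Taking $\limsup_{m \to \infty}$ and invoking \eqref{limsupRh} yields $\limsup_m \mathcal{R}(u^{(m)}) \le \mathcal{R}(u^*)$.

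Finally, I would use $\mathcal{T}_{\mathcal{U}}$-compactness of $B_R$ from Assumption \ref{ass1}(b) together with this bound to extract a $\mathcal{T}_{\mathcal{U}}$-convergent subsubsequence $u^{(m_n)} \to \tilde{u}$; the closedness property in Assumption \ref{ass1}(c), \eqref{discprincD}, \eqref{eps}, and \eqref{noise} force $F(\tilde{u}) = g$. Lower semicontinuity of $\mathcal{R}$ gives $\mathcal{R}(\tilde{u}) \le \liminf_n \mathcal{R}(u^{(m_n)}) \le \mathcal{R}(u^*)$ for every solution $u^*$, so $\tilde{u}$ is $\mathcal{R}$-minimizing with $\mathcal{R}(\tilde{u}) = R^*$. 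Setting $u^* = \tilde{u}$ in the same chain of inequalities sandwiches $\mathcal{R}(u^{(m_n)})$ between $\liminf$ and $\limsup$ both equal to $R^*$, so $\mathcal{R}(u^{(m_n)}) \to R^*$; a subsequence-subsequence argument promotes this to convergence of the full sequence. The Kadets-Klee addendum then follows at once, since weak convergence $u^{(m_n)} \rightharpoonup \tilde{u}$ together with $\|u^{(m_n)} - u_0\| \to \|\tilde{u} - u_0\|$ (via \eqref{Rnorm}) implies norm convergence.

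The main obstacle I anticipate is the interplay between the $k$-dependent projection $P_h^k$ and the stopping index $k_*(\delta,g^\delta)$, since the discretized monotonicity bound features $\mathcal{R}(P_h^{k_*-1} u^*)$ rather than $\mathcal{R}(u^*)$; hypothesis \eqref{limsupRh} is precisely what is needed to pass from the $k$-dependent projected value to the unprojected one in the limit $\delta \to 0$, provided one is careful to work along the nontrivial branch where $k_* \ge 1$ so that the index $k_* - 1$ is admissible.
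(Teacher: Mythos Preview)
Your proposal is correct and follows essentially the same route as the paper: establish that the discretized iterate lies on the boundary of the feasible set via the result from \cite{ClasonKlassen18}, split into the trivial branch ($k_*=0$) and the nontrivial branch, use the discretized monotonicity $\rho_{k,h}\le\mathcal{R}(P_h^k u^*)$ from Corollary~\ref{cor:discr} together with \eqref{limsupRh} to bound $\limsup\mathcal{R}(u^{(m)})$, and then conclude by compactness, lower semicontinuity, and a subsequence--subsequence argument. The only cosmetic differences are that the paper also records the boundary relation for the auxiliary iterate $u_{k+1,a}^\delta$ (which is not actually used afterwards) and cites \cite[Corollary~2.8]{ClasonKlassen18} rather than Corollary~2.6 in the discretized setting.
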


\begin{proof}
Like in the proof of Proposition \ref{prop:convR}, from \cite[Corollary 2.8]{ClasonKlassen18} we conclude that for $k<k_*$, the minimizers $u_{k+1,h}^\delta(\rho_{k,h})$, $u_{k+1,a}^\delta(\rho_{k,a})$ lie on the boundary of the feasible sets
\revision{
\[
\mathcal{R}(u_{k+1,h}^\delta) = \rho_{k,h} \,, \quad 
\mathcal{R}(u_{k+1,a}^\delta) = \rho_{k,a} 
\]
provided \eqref{caseAh} holds.
}
\revmarg{1, 8.}
\revmarg{2, 1.}

If there exists a subsequence $\delta_l \rightarrow 0$ such that for all $l\in\N$, the iteration is already stopped at  $k=0$, we have 
\[
\|F(u_{k_*(\delta_l,g^{\delta_l}),h}^{\delta_l})-g\|=\|F(u_0)-g\|\leq((1+c_\xi)\tau+1)\delta_l\to0\mbox{ as }l\to0\,,
\]
hence $u_0$ solves \eqref{Fxg} and as in the proof of Proposition \ref{prop:convR} we have trivial convergence of the subsequence to $R^*$ in this case.

Otherwise, for any subsequence $\delta_l \rightarrow 0$ there exists a subsequence $\delta_{l_m}$ such that at $k_*(\delta_{l_m},g^{\delta_{l_m}})-1$ 
\revision{
condition \eqref{caseAh} holds 
} 
\revmarg{1, 8.}
\revmarg{2, 1.}
and thus both the discrete and the continuous versions of the $k_*(\delta_{l_m},g^{\delta_{l_m}})$th iterate lie at the boundary of their respective feasible set.
Like in the continuous setting, from 
\[
\rho_{k_*(\delta_{l_m},g^{\delta_{l_m}})-1}=\mathcal{R}(u_h^{(m)})\leq \mathcal{R}(P_h^{k_*(\delta_{l_m},g^{\delta_{l_m}})-1}u^*)
\]
and \eqref{limsupRh} we obtain existence of a $\mathcal{T}_{\mathcal{U}}$ convergent subsequence of 
\[
u_h^{(m)}:=u^{\delta_{l_m}}_{k_*(\delta_{l_m},g^{\delta_{l_m}}),h}
\]
whose limit $\tilde{u}$ by $\mathcal{T}_{\mathcal{U}}$ lower semicontinuity of $\mathcal{R}$ satisfies
\begin{equation}\label{mon1h}
\mathcal{R}(\tilde{u})\leq\liminf_{n\to\infty}\mathcal{R}(u^{(m_n)})\leq\mathcal{R}(u^*)
\end{equation}
for any solution $u^*\in B_R$ of \eqref{Fxg}.
On the other hand, due to the estimate
\[
\|F(u_h^{(m)})-g\|\leq((1+c_\xi)\tau+1)\delta_{l_m}\to0\mbox{ as }m\to0\,,
\]
and by $\mathcal{T}_{\mathcal{U}}$ closedness of $F$, we have that $\tilde{u}$ itself solves \eqref{Fxg}. By \eqref{mon1h}, it is therefore a solution with minimal $\mathcal{R}$ value $R^*$.

As before, a subsequence-subsequence argument yields the assertion.
\end{proof}

Finally, we recover the convergence rates result Corollary \ref{cor:rates} under a source condition \eqref{source} and additional accuracy requirements.

\begin{corollary} \label{cor:ratesh} 
Under the assumptions of Corollary \ref{cor:discr}, assuming additionally \eqref{condthetatilde} 
and 
\[
\mathcal{R}(P_h^{k^*(\delta)-1} u^*)\leq \mathcal{R}(u^*)+C\phi((1+c_\xi)(\tau +1)\delta)
\]
and the variational source condition \eqref{source}, $u^\delta_{k_*(\delta,g^\delta),h}$ satisfies the convergence rate
\[
D_{\epsilon^\dagger}(u^\delta_{k_*(\delta,g^\delta),h},u^*)\leq (\frac{1}{1-\beta}+C)\phi((1+c_\xi)(\tau +1)\delta).
\]
\end{corollary}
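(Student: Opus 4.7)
The plan is to mirror the short proof of Corollary~\ref{cor:rates}, with two modifications that absorb the discretization error. The two ingredients needed are (i) the discrete analog of \eqref{mon} established in the proof of Corollary~\ref{cor:discr}, namely $\mathcal{R}(u^\delta_{k_*(\delta,g^\delta),h})\leq \mathcal{R}(P_h^{k_*(\delta)-1}u^*)$, combined with the new assumption $\mathcal{R}(P_h^{k_*(\delta)-1}u^*)\leq \mathcal{R}(u^*)+C\phi((1+c_\xi)(\tau+1)\delta)$, giving
\[
\mathcal{R}(u^\delta_{k_*(\delta,g^\delta),h})-\mathcal{R}(u^*)\leq C\phi((1+c_\xi)(\tau+1)\delta),
\]
and (ii) a bound on the true residual $\|F(u^\delta_{k_*(\delta,g^\delta),h})-g\|$ in terms of $\delta$, so that $\phi$ evaluated at it fits the stated rate.

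To derive (ii), I would combine the triangle inequality with \eqref{noise}, the discretized discrepancy principle \eqref{discprincD}, and the error bounds \eqref{eps}, \eqref{tee}:
\[
\|F(u^\delta_{k_*,h})-g\|\leq \|F(u^\delta_{k_*,h})-g^\delta\|+\delta\leq (1+c_\xi)\|F_h^{k_*}(u^\delta_{k_*,h})-g^\delta\|+\delta\leq ((1+c_\xi)\tau+1)\delta,
\]
which is bounded above by $(1+c_\xi)(\tau+1)\delta$. Monotonicity of the index function $\phi$ then yields $\phi(\|F(u^\delta_{k_*,h})-g\|)\leq \phi((1+c_\xi)(\tau+1)\delta)$.

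The main estimate proceeds exactly as in the continuous case. Expanding the Bregman distance and applying \eqref{source} at $\tilde{u}=u^\delta_{k_*(\delta,g^\delta),h}$ gives
\[
\begin{aligned}
D_{\epsilon^\dagger}(u^\delta_{k_*,h},u^*)
&=\bigl(\mathcal{R}(u^\delta_{k_*,h})-\mathcal{R}(u^*)\bigr)-\langle\epsilon^\dagger,u^\delta_{k_*,h}-u^*\rangle\\
&\leq C\phi((1+c_\xi)(\tau+1)\delta)+\beta D_{\epsilon^\dagger}(u^\delta_{k_*,h},u^*)+\phi((1+c_\xi)(\tau+1)\delta).
\end{aligned}
\]
Rearranging and dividing by $(1-\beta)$ produces a bound of the stated form (after absorbing constants as in the statement).

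The routine obstacles are bookkeeping of constants and the minor technicality that $u^\delta_{k_*,h}$ lies in $B_{\mathcal{R}(u^*)+C\phi(\cdots)}$ rather than in $B_{\mathcal{R}(u^*)}$ as required by \eqref{source}. The expected remedy is to note that \eqref{source} is assumed on a sublevel set that, for sufficiently small $\delta$, contains $u^\delta_{k_*,h}$; equivalently, by strengthening the source set slightly (or observing that $B_R\supseteq B_{\mathcal{R}(u^*)+C\phi(\cdots)}$ for small $\delta$ together with \eqref{mon}), the source inequality applies. No genuinely new ingredient beyond those in the proofs of Corollaries~\ref{cor:rates} and~\ref{cor:discr} is needed.
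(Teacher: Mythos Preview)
Your proposal is correct and matches the paper's approach; the paper's proof is terser, simply noting that \cite[Proposition~2.9]{KK17} remains valid when the hypothesis $\mathcal{R}(\hat{u}^\delta)\leq\mathcal{R}(u^*)$ is weakened to $\mathcal{R}(\hat{u}^\delta)\leq\mathcal{R}(u^*)+C\phi((1+c_\xi)(\tau+1)\delta)$, together with the residual bound $\|F(u^\delta_{k_*,h})-g^\delta\|\leq(1+c_\xi)(\tau+1)\delta$---your expansion of the Bregman distance is precisely the content of that proposition with the extra $C\phi$ term carried through. You also flag the domain issue for \eqref{source} (whether $u^\delta_{k_*,h}\in B_{\mathcal{R}(u^*)}$), which the paper leaves implicit; your proposed remedy is the natural one.
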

\begin{proof}
Note that 
\revision{
\cite[Proposition 2.9]{KK17}
}
\revmarg{1, 7.}
\revmarg{2, 5.}
remains valid with the condition $\mathcal{R}(\hat{u}^\delta)\leq \mathcal{R}(u^*)$ replaced by
$\mathcal{R}(\hat{u}^\delta)\leq \mathcal{R}(u^*)+C\phi((1+c_\xi)(\tau +1)\delta)$.
Together with the fact that 
\[
\|F(u^\delta_{k_*(\delta,g^\delta),h})-g^\delta\|\leq (1+c_\xi)(\tau +1)\delta
\]
this yields the assertion.
\end{proof}

\section{Numerical experiments} \label{sec:numexp}

To describe the numerical implementation, we will refer to the formulation of the inverse problem as a system of model and observation equation
\begin{eqnarray}\label{Auy}
A(u,y) &=& 0;\\\label{Cyg}
C(y) &=& g.
\end{eqnarray} 
Here, $A: \mathcal{U} \times V \rightarrow W^*$ and $C: V \rightarrow \mathcal{Y}$ are the model and observation operator so that with the parameter-to-state map $S:\mathcal{U} \rightarrow V$ satisfying $A(u,S(u))=0$ and $F=C\circ S$, \eqref{Fxg} is equivalent to the all-at-once formulation \eqref{Auy}, \eqref{Cyg}. 
\revision{
In many applications, \eqref{Auy} will be given by a (partial) differential equation with boundary and/or initial conditions, hence the method considered in this paper will involve PDE constrained optimization as follows.
}
\revmarg{3, 1.}
An IRGN-Ivanov step requires solution of the minimization problem
\begin{eqnarray}\label{minAC} 
(u_{k+1,h}^\delta(\rho_{k,h}),v_{k,h}^\delta,y_{k,h}^\delta) && \hspace*{-0.5cm}
\in \mbox{argmin}_{(u,v,\tilde{y})\in \mathcal{D}(F)\times V^2} \frac{1}{2} \|C'(\tilde{y})v+C(\tilde{y})-g^\delta\|^2 \nonumber\\
\mbox{s.t. } && \hspace*{-0.5cm}
\mathcal{R}(u) \leq \rho,\\ \nonumber 
\mbox{and } \forall w \in W:&& \hspace*{-0.5cm}
\langle A_u'(u_{k,h}^\delta,\tilde{y})(u-u_{k,h}^\delta)+A_y'(u_{k,h}^\delta,\tilde{y})v,w\rangle_{W^*,W} = 0,\\ \nonumber
&& \hspace*{-0.5cm}
\langle A(u_{k,h}^\delta,\tilde{y}),w\rangle_{W^*,W} = 0,
\end{eqnarray} 
where $\rho$ is chosen according to 
\begin{equation}\label{choiceCC}
\tilde{\theta}\|C(\tilde{y})-g^\delta\| \leq \|C'(\tilde{y})v+C(\tilde{y})-g^\delta\| \leq \tilde{\Theta}\|C(\tilde{y})-g^\delta\|
\end{equation} 
provided that \begin{equation}
\|C'(\tilde{y})(y_0 - \tilde{y})+C(\tilde{y})-g^\delta\| \geq \tilde{\Theta}\|C(\tilde{y})-g^\delta\|\,,
\end{equation}   
where $y_0$ solves $A(u_0,y_0)=0$.
We assume that $C, \mathcal{R}$ and the norms are evaluated without discretization error.

In \cite[Equations (63)-(67)]{KaltenbacherPreviatti18} we provide explicitly the optimality system for \eqref{minAC}.
The strategy to find a stationary point cf. \cite{KaltenbacherPreviatti18} is to first solve
\begin{equation}
\langle A(u_{k,h}^\delta, \tilde{y}),w\rangle_{W^*,W} = 0, \forall w \in W,
\end{equation}
and then interpret the remaining system as optimality system for the following problem
\begin{eqnarray*} \nonumber
(u_{k+1,h}^\delta(\rho_{k,h}),v_{k,h}^\delta) && \hspace*{-0.5cm}
\in \mbox{argmin}_{(u,v)\in \mathcal{D}(F)\times V} \frac{1}{2} \|C'(\tilde{y})v+C(\tilde{y})-g^\delta\|^2\\ \nonumber
\mbox{s.t. } && \hspace*{-0.5cm}
\mathcal{R}(u) \leq \rho,\\ \nonumber 
\mbox{and } \forall w \in W:&& \hspace*{-0.5cm}
\langle A_u'(u_{k,h}^\delta,\tilde{y})(u-u_{k,h}^\delta)+A_y'(u_{k,h}^\delta,\tilde{y})v,w\rangle_{W^*,W} = 0.
\end{eqnarray*}

\medskip

As a model example we consider the following inverse source problem for a semilinear elliptic PDE, where the model and observation equations are given by
\begin{eqnarray*}
-\Delta y +\kappa y^3 &=&\chi_{\omega_c}u, \mbox{in } \Omega \subset \mathbb{R}^d,\\
y&=&0, \mbox{on } \partial \Omega,\\
C(y) &=& y\mid_{\omega_o} ,  \quad \|y-g^\delta\|_{L^2(\omega_o)} \leq \delta.
\end{eqnarray*}
Here we assume that $\Omega \subseteq \mathbb{R}^2$ is bounded and polygonal with boundary $\Gamma : = \partial \Omega$ and $\kappa\in\mathbb{R}$ is a parameter that allows to tune the nonlinearity of the model.
\revision{
The sets $\omega_c$ and $\omega_o$ are measurable subsets of $\Omega$
(so that the linear operators $\chi_{\omega_c}$ and $C$ are bounded)
on which the source is supported and on which measurements are taken, respectively.
}
\revmarg{2, 9.}

We intend to regularize by imposing $L^\infty$ bounds and thus use the function space setting 
\[
\begin{aligned}
&A: L^\infty(\omega_c)\times H^{1}_0 (\Omega) \to H^{-1} (\Omega)\,, \quad A(u,y) = - \Delta y + \kappa y^3 -u\,,\\
&C: H^1_0(\Omega)\longrightarrow L^2(\omega_o) \mbox{ defined by embedding into $L^2(\Omega)$ and restriction to $\omega_o$}\\
&\chi_{\omega_c}: L^\infty(\omega_c)\to L^\infty(\Omega)\,, \quad \chi_{\omega_c}u=\begin{cases}u\mbox{ on }\omega_c \\0\mbox{ on } \Omega\setminus\omega_c\end{cases}\\
&\mathcal{U}=L^\infty(\omega_c)\,, \mathcal{Y}=L^2(\omega_o)\,.
\end{aligned}
\] 
This problem has been proven to 
\revision{satisfy}
\revmarg{2, 12.}
the tangential cone condition \eqref{ctc}, cf. \cite[Section 4]{KaltenbacherPreviatti18}.

The IRGNM-Ivanov minimization step with the regularization functional $\mathcal{R}(u)=\|u\|_{L^\infty(\omega_c)}$ is given by (skipping the discretization subscript $h$ in the notation)
\begin{eqnarray*}
(u_{k+1}^\delta,v_k^\delta,y_k^\delta) && \hspace*{-0.5cm}
\in \mbox{argmin}_{(u,v,\tilde{y})\in L^\infty(\omega_c)\times (H^1_0(\Omega))^2} \frac{1}{2} \|v+\tilde{y}-g^\delta\|^2_{L^2(\omega_o)},\\
\mbox{ s.t. } && \hspace*{-0.5cm}
 -\rho_k \leq u(x)\leq \rho_k, \mbox{ a.e. in } \Omega, \\
\mbox{ and } \forall w\in H^1_0 (\Omega): && \hspace*{-0.5cm}
\int_{\Omega}(\nabla v \nabla w+3\kappa \tilde{y}^2 vw)dx = \int_{\Omega} w(u-u_k^\delta)dx,\\
&& \hspace*{-0.5cm}
\int_{\Omega} (\nabla \tilde{y}\nabla w+\kappa \tilde{y}^3w)dx= \int_{\Omega} wu_k^\delta dx .
\end{eqnarray*}
For the Gauss-Newton step, one needs to first solve the equation
\begin{equation}\label{nleq}
-\Delta \tilde{y} +\kappa \tilde{y}^3 = u_k^\delta
\end{equation} 
and set $y_k^\delta=\tilde{y}$ -- note that \eqref{nleq} is actually decoupled from the minimization problem -- and then, solve the following optimality system with respect to $(u,v,p)$ (written in a strong formulation)
\begin{eqnarray*}
\|u\|_{L^{\infty}(\omega_c)} \leq \rho_k  \mbox{ and } \int_\Omega (u^{*} -u)p dx & \leq& 0, \forall u^{*}\in B_{\rho_k}^{L^{\infty}(\omega_c)}\\
-\Delta p +3\kappa (y_k^\delta)^2p +2v +2y_k^\delta&=&2g^\delta \\
-\Delta v +3\kappa (y_k^\delta)^2v -u&=&-u^\delta_k,
\end{eqnarray*} 
which can be interpreted as the optimality system for the minimization problem
\begin{eqnarray}\label{linearivanovn}
(u_{k+1}^\delta,v_{k}^\delta)&&\in\mbox{argmin}_{(u,v)\in L^{\infty}(\omega_c)\times H^1_0(\Omega)} \frac{1}{2}\|y_k^\delta+v-g^\delta\|^2_{L^2(\omega_o)},\\ \nonumber
\mbox{ s.t. } && -\rho_k \leq u(x)\leq \rho_k, \mbox{ a.e. in } \Omega, \\ \nonumber
 && -\Delta v +3\kappa (y_k^\delta)^2 v = u-u^\delta_k, \\ \nonumber
\end{eqnarray} 
where $\rho_k$ is computed according to
\begin{equation}\label{choicepm}
\tilde{\theta}\|y_k^\delta - g^\delta \|_{L^2(\Omega)} \leq \|v + y_k^\delta-g^\delta\|_{L^2(\Omega)} \leq \tilde{\Theta}\|y_k^\delta - g^\delta \|_{L^2(\Omega)}.
\end{equation}

In order to solve \eqref{nleq} numerically, as in \cite{KaltenbacherPreviatti18}, we apply a damped Newton iteration to the equation $\Phi(\tilde{y}) = 0$ where
\[
\Phi: H_0^1 (\Omega) \rightarrow H^{-1}(\Omega), \quad \Phi(\tilde{y}) = -\Delta \tilde{y} +\kappa \tilde{y}^3 - u_k^\delta,
\]
which leads to the iteration
\[ \tilde{y}^{l+1}=\tilde{y}^l - (-\Delta \tilde{y} +3\kappa (\tilde{y}^l)^2)^{-1}(-\Delta \tilde{y} +\kappa (\tilde{y}^l)^3 - u_k^\delta)
\]
which is stopped as soon as the residual $\|\Phi(\tilde{y}^l)\|_{H^{-1}(\Omega)}$ has been reduced to a certain tolerance $tol$ (which we set to $1.e-6$ in our computations), see Algorithm \ref{algo:dampedNewton}. 
\begin{algorithm}
\caption{Solving Nonlinear Equation: Damped Newton\label{algo:dampedNewton}}
\begin{algorithmic}[1]
\State \textbf{Input:} $u_k^\delta, g^\delta$.
\State Choose initial guess $\tilde{y}^0$ (e.g., $\tilde{y}^0 \equiv 0$) and compute $\|\Phi(\tilde{y}^0)\|_{H^{-1}(\Omega)}$.
\While{$\|\Phi(\tilde{y}^l)\|_{H^{-1}(\Omega)} > tol$}
\State Compute $d$ by solving $-\Delta d +3\kappa (\tilde{y}^l)^2d =\Delta \tilde{y}^l -\kappa (\tilde{y}^l)^3 +u_k^\delta$.
\State Set $s = 1$.
\State Compute $\|\Phi(\tilde{y}^l+sd)\|_{H^{-1}(\Omega)}$.
\While{$\|\Phi(\tilde{y}^l+sd)\|_{H^{-1}(\Omega)} > 0.8 \|\Phi(\tilde{y}^l)\|_{H^{-1}(\Omega)}$}
\State Set $s = 0.5s$.
\State Compute $\|\Phi(\tilde{y}^l+sd)\|_{H^{-1}(\Omega)}$.
\EndWhile
\State $\tilde{y}^{l+1} = \tilde{y}^l +sd$.
\EndWhile
\Return $\tilde{y}$.
\end{algorithmic}
\end{algorithm}
For solving \eqref{linearivanovn} numerically  we apply a semi-smooth Newton method together with a Moreau-Yosida regularization in order to make the problem more convex, see Algorithm \ref{algo:semismoothNewton}. Then, we perform a search for the regularization parameter based on \eqref{choicepm}, see Algorithm \ref{algo:bisection}.
We point out that this search for the regularization parameter is similar to Algorithm 2 in \cite{ClasonKlassen18}, while here we use multiples of the discrepancy instead of $\delta$ and $\tau \delta$. Another difference to \cite{ClasonKlassen18} is the fact that a damped semi-smooth Newton method is performed there to solve the linear discretized (hence semismooth) problem and here, we apply a semi smooth Newton method with Moreau-Yosida regularization (to guarantee semismoothness) to solve our linearized problem.
Using Moreau-Yosida regularization, the problem we are actually solving in place of \eqref{linearivanovn} is
\begin{eqnarray*}
(u_{k+1}^\delta,v_k^\delta) &&\hspace*{-0.5cm}
\in \mbox{argmin}_{u,v\in L^\infty(\Omega)\times H^1_0(\Omega)} \frac{1}{2}\|v+y_k^\delta-g^\delta\|^2_{L^2(\Omega)} + \frac{\gamma}{2}\|u\|_{L^2(\Omega)}^2\\
\mbox{s.t. } && \hspace*{-0.5cm}
-\rho_k \leq u(x) \leq \rho_k, \mbox{ a.e. in } \Omega,\\
&&\hspace*{-0.5cm}
\left(-\Delta  +3\kappa (y_k^\delta)^2\right)v = u-u_k^\delta,\\
\end{eqnarray*}
which enhances convexity and thus makes the numerical solution more stable. 
We point out that regularization still relies on the $L^\infty$ bounds imposed by Ivanov regularization only, since the parameter $\gamma$ is chosen very small ($\gamma=1.e-9$ in our computations).

For this problem, we have the following optimality conditions
\begin{eqnarray}
\left(-\Delta+3\kappa(y_k^\delta)^2\right)v -u &=& -u_k^\delta,\label{state}\\
v + \left(-\Delta+3\kappa(y_k^\delta)^2\right)^*p &=& -y_k^\delta +g^\delta,\label{adjoint}\\
u - \mbox{proj}_{[-\gamma \rho, \gamma \rho]}(\frac{1}{\gamma}p) &=& 0,\label{gradient}
\end{eqnarray}
where $p\in H_0^1(\Omega)$ is the adjoint state and $\mbox{proj}_{[-\gamma \rho,\gamma \rho]}: L^2(\Omega) \rightarrow L^2(\Omega)$,
\begin{equation}\label{proj}
[\mbox{proj}_{[-\gamma \rho, \gamma \rho]}(p)](x) =: \begin{cases} \gamma \rho, \mbox{ if } p(x) > \gamma \rho,\\ p(x), \mbox{ if } p(x) \in [-\gamma \rho, \gamma \rho],\\ -\gamma \rho, \mbox{ if } p(x) < - \gamma \rho. \end{cases}
\end{equation}
The optimality system \eqref{state}--\eqref{gradient} is an operator equation for the unknowns $(u,v,p)$ to which we apply a semismooth Newton method, which after dicretization by piecewise linear elements for $(u,v,p)$ leads to the linear system $Hx=b$ with 
\begin{equation}\label{sysKMI}
H=\left( \begin{array}{ccc} K&0&-M\\M&K^*&0\\0&-\frac{1}{\gamma}As&I\\ \end{array} \right), \quad 
x=\left( \begin{array}{c} v_{k}^\delta\\p_{k}^\delta\\u_{k+1}^\delta \end{array} \right), \quad 
b=\left( \begin{array}{c} -Mu_k^\delta\\ M(-y_k^\delta +g^\delta)\\ \rho (1_{\mathcal{A}_+}-1_{\mathcal{A}_-})\end{array}\right),
\end{equation}
where $K$, $K^*$ are the stiffness matrices of the primal and adjoint problems \eqref{state}, \eqref{adjoint}, respectively, $M$ is the mass matrix, $I$ is the identity matrix, 
$\mathcal{A}_+,\mathcal{A}_-,\mathcal{I}$ are the active and inactive sets defined as
\begin{equation*}
\mathcal{A}_+ : =  \{i : p_i > \rho \gamma \}, \quad \mathcal{A}_- := \{i : p_i < - \rho \gamma \}, \quad\mathcal{I} := \{i : |p_i| \leq \rho\gamma \},
\end{equation*}
the corresponding characteristic functions are defined by $[1_{\mathcal{A}_-}]_i = 1$, if $i\in \mathcal{A}_-$ and $0$ else, (and analogously for $\mathcal{A}_+$, $\mathcal{I}$). The matrix $As$ is a diagonal matrix defined as $I - \mbox{diag}(1_{\mathcal{A}_+}+1_{\mathcal{A}_-})$,
$u_{k+1}^\delta$ is the iterate for the searched for source, $v_k$ the linearized state and $p_k$ the Lagrange multiplier for the PDE constraint. 
Note that $u_k^\delta$ is obtained from the previous Gauss Newton step, $y_k^\delta$ is precomputed by solving \eqref{nleq} with a damped Newton method, and $g^\delta$ is the noisy data.

Here we have made a transition from function space notation to coefficient vectors with respect to basis functions. More precisely, we consider a finite element space $Y_h \subset H_0^1(\Omega)$, spanned by the usual continuous piecewise linear nodal basis functions based on the vertices $(x_i)_{i=1}^n$ of a regular triangulation of $\bar{\Omega}$. Notationally, we identify $u_{k+1}^\delta,v_k,p_k \in Y_h$ with their respective coefficient vectors for this nodal basis, these actually contain the values at the nodes in this case. The fact that functions in $Y_h$  attain their maximum and minimum at the nodes, allows a straightforward definition of active and inactives sets as well as projections \eqref{proj}. Also, now $g^\delta$ denotes the nodal basis coeffficient vector of the $L^2$ projection of the continuous data onto $Y_h$. 

We here used the locally superlinearly convergent semismooth Newton method \cite{HintermullerItoKunisch02}, \cite{Ulbrich11}. It is known that the pointwise projection \eqref{proj} is semismooth from $L^p(\Omega)\rightarrow L^q(\Omega)$ if and only if $p> q$, and hence the last equation of our optimality conditions would not be semismooth with respect to $u$ unless we had addressed this issue by adding a small Tikhonov (or Moreau-Yosida) term. We emphasize once more that this term is only used for the purpose of numerical efficiency, and, since it goes with a very small factor ($\gamma=1.e-9$ in our computations) does not interfere with the actual regularization, relying on Ivanov regularization only.

The algorithm (cf. Algorithm \ref{algo:semismoothNewton} below) gradually drives $\gamma$ to zero along a geometrically decaying sequence and for each fixed $\gamma$, solves the linear system \eqref{sysKMI} and updates the active sets until they do not change any more; we set an upper bound $i_{\max}$ (thirty in our computations) on the number of Newton iterations.  Then we decrease $\gamma$ and continue the process, using the previously found point as an initial guess for the semismooth Newton iteration on this $\gamma$ level, expecting it to lie in the region of convergence of the method. Thus we perform a numerical continuation method along the parameter $\gamma$.

We say that the algorithm converged succesfully if in addition to having no updates in the active sets, we also have achieved $\gamma$ at the order of $1e-9$ and the norm of the gradient, i.e., the residual of the optimality system is below $1e-9$, i.e., 
\begin{equation}\label{grad}
\| grad \| : = \left|\left| \left( \begin{array}{c} Kv_k -M(u_{k+1}-u_k^\delta) \\ Mv_k +K^*p_k -M(y_k^\delta-g^\delta) \\ u_{k+1} - \mbox{proj}_{[-\gamma \rho, \gamma \rho]} (\frac{1}{\gamma}p) \end{array}\right)\right|\right| < 1e-9.
\end{equation} 
In this case, a bolean variable \textit{conv} $=$ True is returned; it returns False otherwise. 

\begin{algorithm}
\caption{Solving Linearized Ivanov Problem: Semi-smooth Newton with Moreau-Yosida Regularization\label{algo:semismoothNewton}}
\begin{algorithmic}[1]
\State \textbf{Input:} $\rho, u_k^\delta, \tilde{y}, g^\delta$, $\gamma_0$
\For{$\ell=0,1,2,\ldots$}
\State Set $\gamma=\gamma_0 2^{-\ell}$
\For{$i = 0,\dots, i_{\max}$}
\State Update $\mathcal{A}_+, \mathcal{A}_-$.
\State Compute $K, K^*, M, As$.
\State Compute $H$, $b$, according to \eqref{sysKMI}
\State Compute $x_{k+1}=(v_k^\delta, p_k^\delta, u_{k+1}^\delta)$ by solving $Hx_{k+1}=b$
\If{no changes in $\mathcal{A}_+, \mathcal{A}_-$}
\State \textbf{break.}
\EndIf
\EndFor
\State Compute $\|grad\|$, according to \eqref{grad}
\If{$i = 0$ and $\gamma < 1e-9$ and $\|grad\| \leq  1e-9$}
\State \textit{conv} = \textit{True}.
\State \textbf{break}.
\EndIf
\EndFor
\Return $u, v$, \textit{conv}
\end{algorithmic}
\end{algorithm}


The search for the regularization radius is described in Algorithm \ref{algo:bisection}, it basically relies on a bisection method. 
By our existence proof of a regularization parameter $\rho_k$, both phases of Algorithm \ref{algo:bisection} terminate after finitely many steps.
\begin{algorithm}
\caption{Search for Regularization Radius\label{algo:bisection}}
\begin{algorithmic}[1]
\State \textbf{Input:} $\rho_{start}, \tilde{\theta}, \tilde{\Theta}, u_k^\delta, \tilde{y}$, $g^\delta$.
\State Phase I (enlargement of search interval)
\State Set $\rho = \rho_{start}$
\State Set $\bar{\theta} = \frac{1}{2}(\tilde{\theta}+\tilde{\Theta})\|\tilde{y}-g^\delta\|$.
\For{$i = 1,2,\dots$}
\State Compute $u$, $v$ by Algorithm \ref{algo:semismoothNewton} with input $\rho, u_k^\delta, \tilde{y}, g^\delta$
\State Determine $d(\rho)=\|\tilde{y} + v - g^\delta\|$.
\If{$\tilde{\theta} \leq d(\rho) \leq \tilde{\Theta}$ and \textit{conv} = \textit{True}}
\Return $u_{k+1}^\delta:=u$ and $\rho$.
\EndIf
\If{$d(\rho) < \tilde{\theta}$ and \textit{conv} = \textit{True}}
\State \textbf{break.}
\EndIf
\State Set $\rho = \rho + \rho_{start}$.
\State $i=i+1$.
\EndFor
\State Phase II (bisection for fine search)
\State Set $i=0$.
\State Set $a=0,b=\rho$.
\State Compute $u$, $v$ by Algorithm \ref{algo:semismoothNewton} with input $\rho=a, u_k^\delta, \tilde{y}, g^\delta$
\For{$i = 1,2,\dots$}
\State Compute $u$, $v$ by Algorithm \ref{algo:semismoothNewton} with input $\rho, u_k^\delta, \tilde{y}, g^\delta$
\State Determine $d(\rho)=\|\tilde{y} + v-g^\delta\|$.
\If{$\tilde{\theta} \leq d(\rho) \leq \tilde{\Theta}$ and \textit{conv} = \textit{True}}
\Return $u_{k+1}^\delta:=u$ and $\rho$.
\EndIf
\If{$sign(d(\rho)-\bar{\theta})=sign(d(a)-\bar{\theta})$}
\State Set $a=\rho$.
\Else
\State Set $b=\rho$.
\EndIf
\State Set $\rho=\frac{1}{2}(a+b)$.
\State $i=i+1$.
\EndFor
\end{algorithmic}
\end{algorithm}

\begin{figure}[h]
\begin{tabular}{cc}
\includegraphics[width=0.43\textwidth]{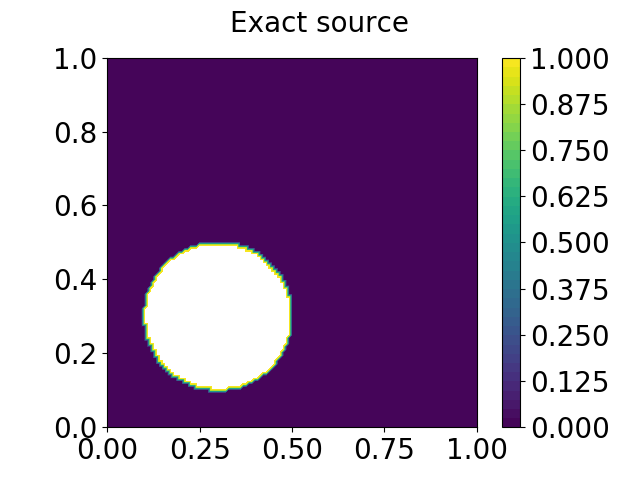} & 
\includegraphics[width=0.34\textwidth]{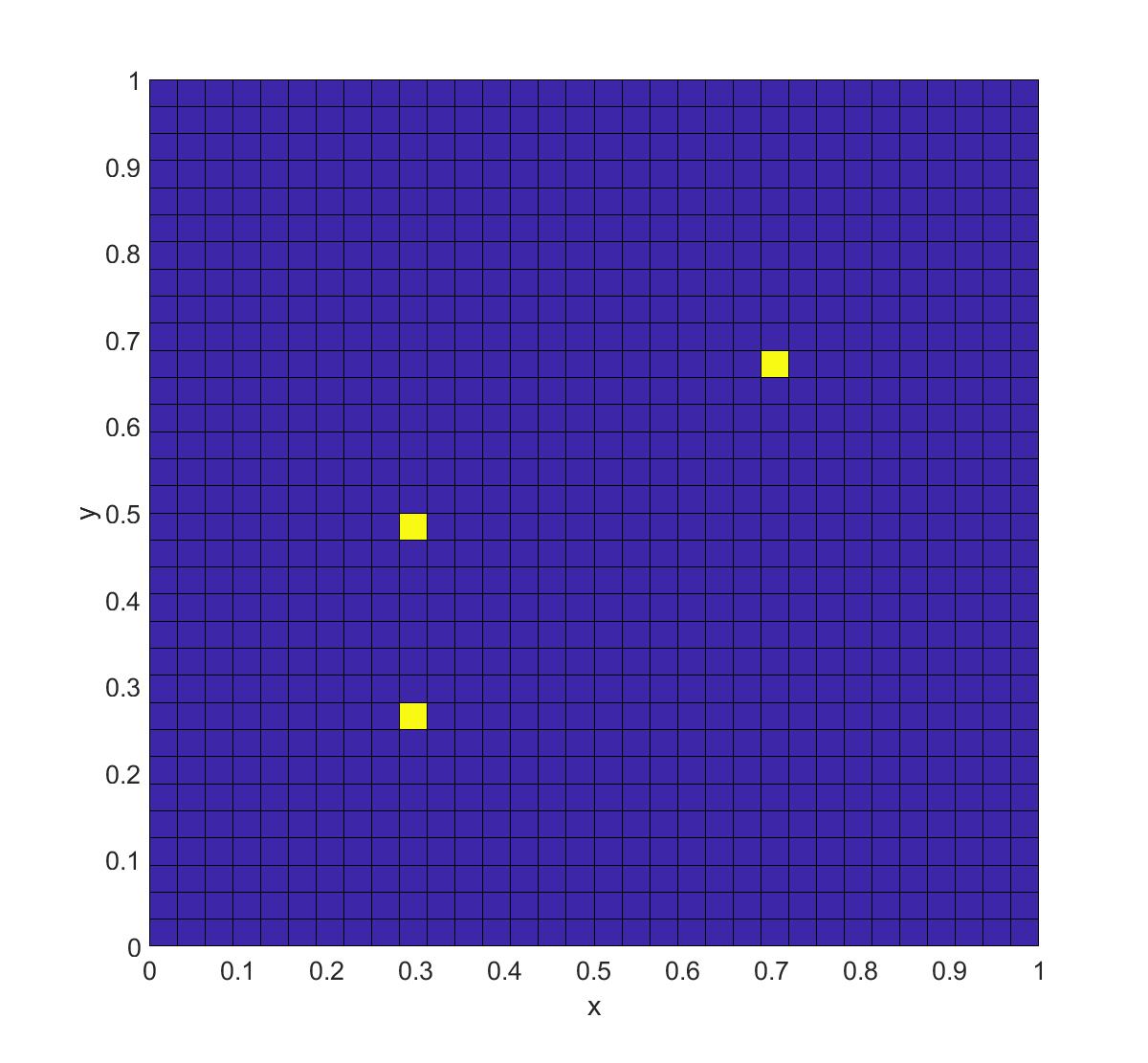} \\
\end{tabular}
\caption{Exact solution $u$ and location of spots for testing weak* $L^\infty$ convergence.
\label{fig:exact}}
\end{figure}

\bigskip

We performed test computations on a $2$d domain $\omega_c = \omega_o = \Omega = (0,1)^2$, on a regular finite element grid consisting of $2N^2$ triangles, with $N=32$. As values of the nonlinearity parameter, we considered $\kappa =1$ and $\kappa=100$, always in the left and right hand side of the page, respectively, in the figures and tables below. 
The exact source function $u_{ex}$ was chosen as a characteristic function of a cylinder of height $1$,  centered at $(0.3, 0.3)$ with radius $0.2$, where the exact regularization parameter $\rho^\dagger$ takes the value $1$, see Figure \ref{fig:exact}, left. 

In order to avoid an inverse crime, we generated the synthetic data on a finer grid and, after projection of $u_{ex}$ onto the computational grid, we added normally distributed random noise of levels $\delta$ in $\{0.1 \%, 1\%, 5\%, 10\%\}$ to obtain synthetic data 
\revision{$g^\delta$.}
\revmarg{2, 12.}

In all tests we started with the constant function with value zero for $u_0$. Moreover, we set $\tau = 2.0$ and $\tilde{\theta} = 0.51, \tilde{\Theta} = 0.98$. We always started the algorithm with a very coarse approximation for $\rho$, by setting $\rho_{start} = 100$.  According to our convergence result with $\mathcal{R}=\|.\|_{L^\infty(\Omega)}$, we can expect weak* convergence in $L^\infty(\Omega)$ here. Thus, we computed the errors in certain spots within the two homogeneous regions and on their interface,
\revision{
\[
\begin{aligned}
&\mbox{spot}_1 = [0.3,0.3+\tfrac{1}{N})\times[0.3,0.3+\tfrac{1}{N}), \\
&\mbox{spot}_2 = [0.7,0.7+\tfrac{1}{N})\times[0.3,0.3+\tfrac{1}{N}), \\ 
&\mbox{spot}_3 = [0.3,0.3+\tfrac{1}{N})\times[0.5,0.5+\tfrac{1}{N}), 
\end{aligned}
\]
see Figure \ref{fig:exact}, right, more precisely, corresponding to 
the chracteristic functions of $\mbox{spot}_i$ considered as elements of $L^1(\Omega)$,
in order to exemplarily test weak* $L^\infty(\Omega)(=L^1(\Omega)^*)$ convergence.
}
\revmarg{2, 10.} 
Additionally we computed $L^1$ errors.
In our tests we actually used the constraints $0\leq u\leq\rho$, which corresponds to defining $\mathcal{R}=\|\cdot\|_\infty+I_{\geq0}$, with the indicator function $I_{\geq0}(u)=\begin{cases}0\mbox{ if }u\geq0\mbox{ a.e. on }\Omega\\+\infty\mbox{ else}\end{cases}$.

The results are documented as follow: First, Figure \ref{fig:uniform} displaying two columns with the reconstructions for $\kappa =1$ on the left and for $\kappa = 100$ on the right for noise levels $1.0\%, 5.0\%, 10.0\%$. 
Then, Table \ref{tab:uniform} with the results obtained by the tests for $\delta=0.1\%, 1.0\%, 5.0\%, 10.0\%$, showing how many Gauss Newton steps
\revision{
and how many minimizations} 
\revmarg{3, 3.}
were required, the finally computed regularization parameter $\rho$, the $L^1$ error, and the error at the three spots for particular values of $\delta$ and $\kappa$.

\begin{figure}[h]
\begin{tabular}{c|c}
$\kappa = 1$ & $\kappa = 100$ \\
\hline
$\delta = 1.0\%$ & $\delta = 1.0\%$ \\
\includegraphics[width=0.43\textwidth]{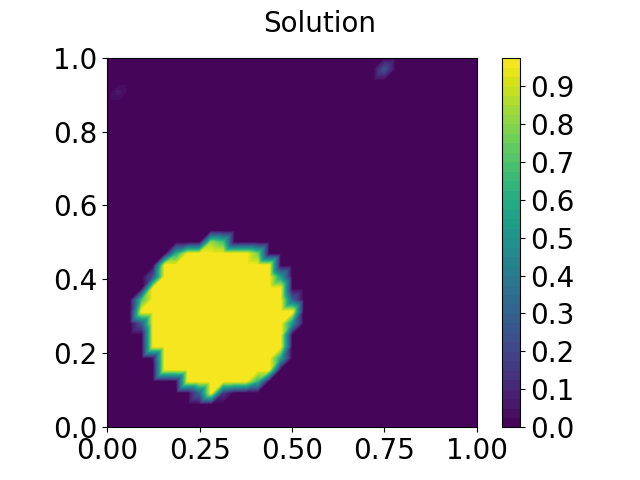} & \includegraphics[width=0.43\textwidth]{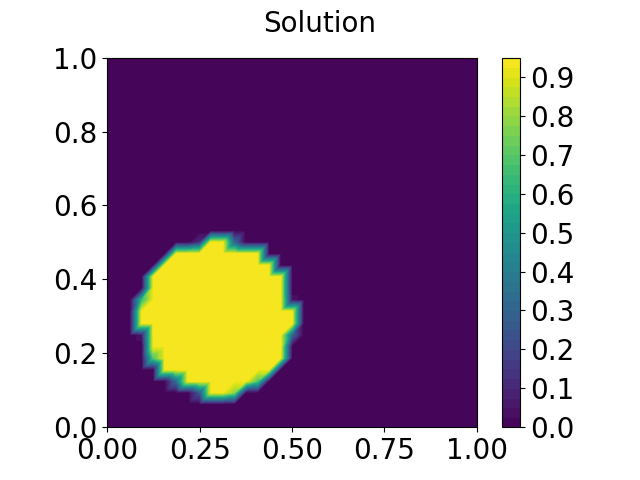} \\
\hline
$\delta = 5.0\%$ & $\delta = 5.0\%$ \\
\includegraphics[width=0.43\textwidth]{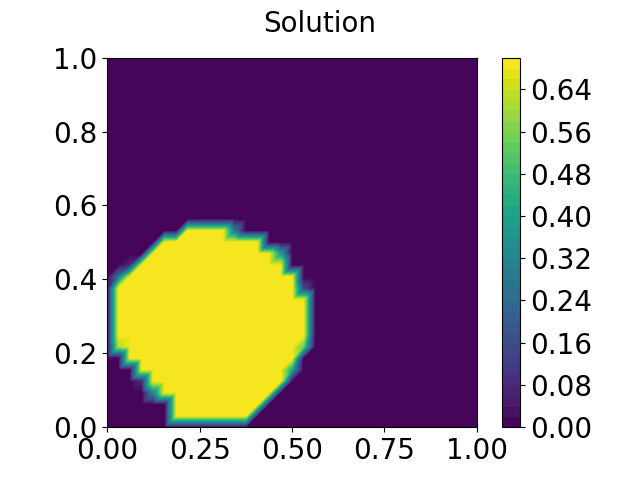} & \includegraphics[width=0.43\textwidth]{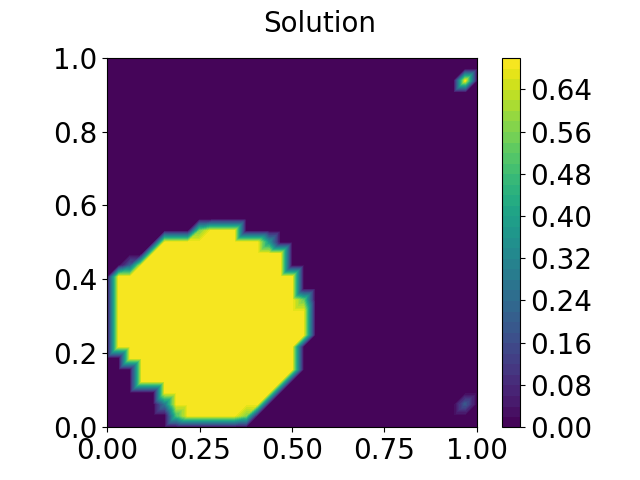} \\
\hline
$\delta = 10.0\%$ & $\delta = 10.0\%$ \\
\includegraphics[width=0.43\textwidth]{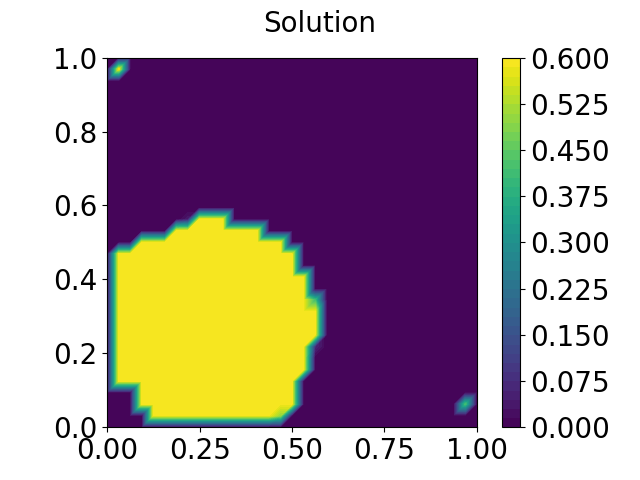} & \includegraphics[width=0.43\textwidth]{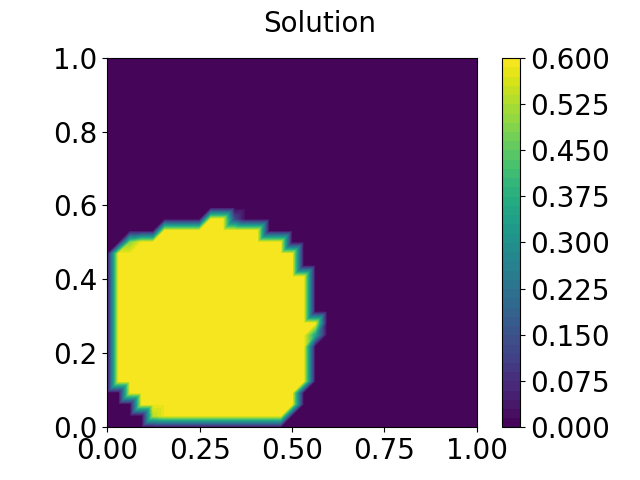} \\
\end{tabular}
\caption{Reconstructions of the source $u$.\label{fig:uniform}}
\end{figure}

{\small
\begin{table}[h]
\begin{tabular}
{|l||l|}
\hline
 $\kappa$ = 1& \\
\hline
& $ \delta = 0.1\%$ \\
\hline
\textit{GN iterations}& 15 \\ 
\hline
\revision{
\textit{minimizations}
}
& 
\revision{
74
}
\\ 
\hline
\textit{parameter} $\rho$& 0.9973 \\
\hline
$L^1(\Omega)$ error& 0.0151 \\
\hline
error point 1& 0.0679 \\
\hline
error point 2& 0.0000 \\
\hline
error point 3&0.2455 \\
\hline
\hline
& $\delta = 1.0\%$ \\
\hline
\textit{GN iterations}& 10 \\ 
\hline
\revision{
\textit{minimizations}
}
& 
\revision{
37
}
\\ 
\hline
\textit{parameter} $\rho$& 0.9233 \\
\hline
$L^1(\Omega)$ error& 0.0255 \\
\hline
error point 1& 0.0766\\
\hline
error point 2& 0.0000\\
\hline
error point 3&0.6574 \\
\hline
\hline
& $\delta = 5.0\%$ \\
\hline
\textit{GN iterations}& 7 \\ 
\hline
\revision{
\textit{minimizations}
}
& 
\revision{
23
}
\\ 
\hline
\textit{parameter} $\rho$& 0.6866 \\
\hline
$L^1(\Omega)$ error& 0.0968 \\
\hline
error point 1& 0.3133\\
\hline
error point 2& 0.0000\\
\hline
error point 3&0.6866 \\
\hline
\hline
& $\delta = 10.0\%$ \\
\hline
\textit{GN iterations}& 5\\ 
\hline
\revision{
\textit{minimizations}
}
& 
\revision{
16
}
\\ 
\hline
\textit{parameter} $\rho$& 0.5859  \\
\hline
$L^1(\Omega)$ error& 0.1296 \\
\hline
error point 1& 0.4140\\
\hline
error point 2& 0.0000\\
\hline
error point 3&0.5859 \\
\hline
\end{tabular} \hspace*{0.1\textwidth} \begin{tabular}
{|l||l|}
\hline
 $\kappa =$100 &\\
\hline
& $\delta = 0.1\%$ \\
\hline
\textit{GN iterations}& 15 \\ 
\hline
\revision{
\textit{minimizations}
}
& 
\revision{
74
}
\\ 
\hline
\textit{parameter} $\rho$& 0.9973 \\
\hline
$L^1(\Omega)$ error& 0.0151\\
\hline
error point 1&  0.0679\\
\hline
error point 2&  0.0000\\
\hline
error point 3& 0.2684\\
\hline
\hline
& $\delta = 1.0\%$ \\ 
\hline
\textit{GN iterations} & 10 \\ 
\hline
\revision{
\textit{minimizations}
}
& 
\revision{
36
}
\\ 
\hline
\textit{parameter} $\rho$&  0.9233 \\
\hline
$L^1(\Omega)$ error & 0.0256\\
\hline
error point 1&  0.0766\\
\hline
error point 2&  0.0000\\
\hline
error point 3 & 0.6574\\
\hline
\hline
& $\delta = 5.0\%$ \\
\hline
\textit{GN iterations} & 8 \\ 
\hline
\revision{
\textit{minimizations}
}
& 
\revision{
21
}
\\ 
\hline
\textit{parameter} $\rho$&  0.6591 \\
\hline
$L^1(\Omega)$ error & 0.1068\\
\hline
error point 1&  0.3408\\
\hline
error point 2&  0.0000\\
\hline
error point 3&0.6591\\
\hline
\hline
& $\delta = 10.0\%$ \\
\hline
\textit{GN iterations}& 5 \\ 
\hline
\revision{
\textit{minimizations}
}
& 
\revision{
16
}
\\ 
\hline
\textit{parameter} $\rho$& 0.5859 \\
\hline
$L^1(\Omega)$ error & 0.1320\\
\hline
error point 1& 0.4140\\
\hline
error point 2& 0 0.0000\\
\hline
error point 3& 0.5859\\
\hline
\end{tabular}
\caption{Results obtained from experiments without adaptive mesh refinement.
\label{tab:uniform}}
\end{table}

}

In order to numerically illustrate Corollary \ref{cor:discr}, we have implemented the IRGNM with an adaptive mesh refinement using the error estimators from \cite{ClasonKaltenbacherWachsmuth16}.
These estimators have been shown to reliably estimate the discretization error in the cost functional corresponding to the linear problem even in the setting of general nonreflexive Banach spaces relevant here, see \cite[Section 4]{ClasonKaltenbacherWachsmuth16} so we applied them to estimate the error in the Newton step cost function as a replacement for \eqref{alp}. More precisely, in an inner loop, we adaptively refine the discretization (marking and subdividing those elements with the largest contribution in the error estimator) until the error estimator falls below a fixed multiple of the computed nonlinear discrepancy $\|F^k_h(u_{k,h}^\delta)-g^\delta\|$.

In order to obtain error estimators for the quantities of interest in \eqref{eta}--\eqref{alp}, the concept of goal oriented error estimators would have to be employed. Indeed, in \cite{HintermullerHoppe08,VexlerWollner08} such estimators based on adjoint techniques (dual weighted residual estimators) have been developed in the setting of control constraints as relevant here. However, they contain an $L^2$ regularization term for the control $u$ (there denoted by $q$) which is not present in our setting and which is crucial for the computation of the discretized control (cf. \cite[Equation (4.7)]{VexlerWollner08}). Thus in order to follow the approach from \cite{VexlerWollner08}, one would have to modify the arguments there in order to prove that the discrete version of $q$, that we obtain more implicitely from the first order optimality conditions, still allows to establish the statements made about the error estimator there. This will be subject of future work.

The tests here started with $N=8$ for the triangulation of finite elements and we show the results in the same way as before but also displaying the adaptively refined mesh, see Figure \ref{fig:adaptive} and Table \ref{tab:adaptive}.

\begin{figure}[h]
\begin{tabular}{c|c}
$\kappa = 1$ & $\kappa = 100$ \\
\hline
$\delta = 1.0\%$ & $\delta = 1.0\%$ \\
\includegraphics[width=0.43\textwidth]{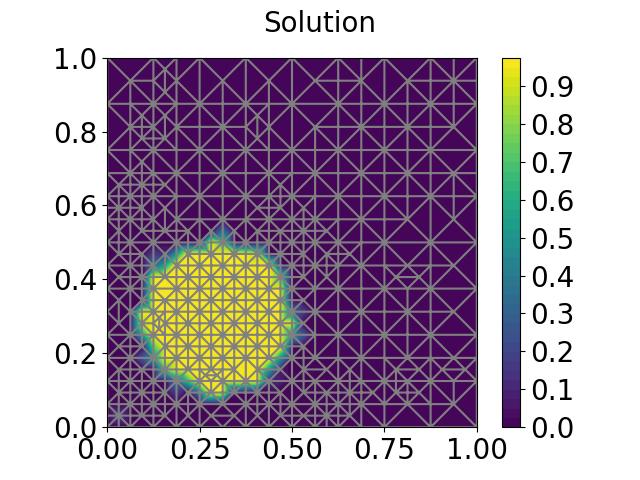} & \includegraphics[width=0.43\textwidth]{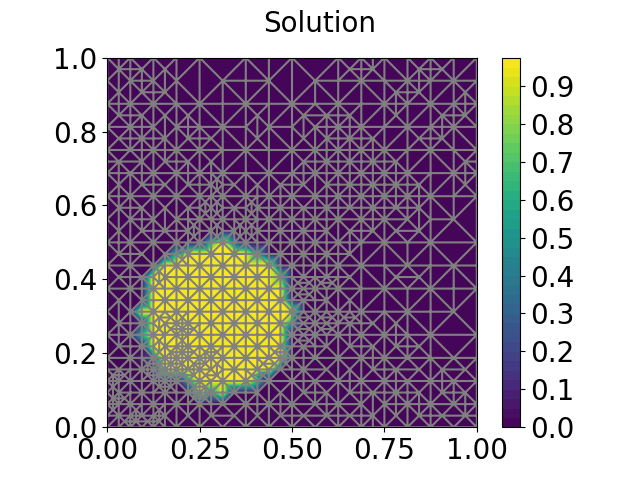} \\
\hline
$\delta = 5.0\%$ & $\delta = 5.0\%$ \\
\includegraphics[width=0.43\textwidth]{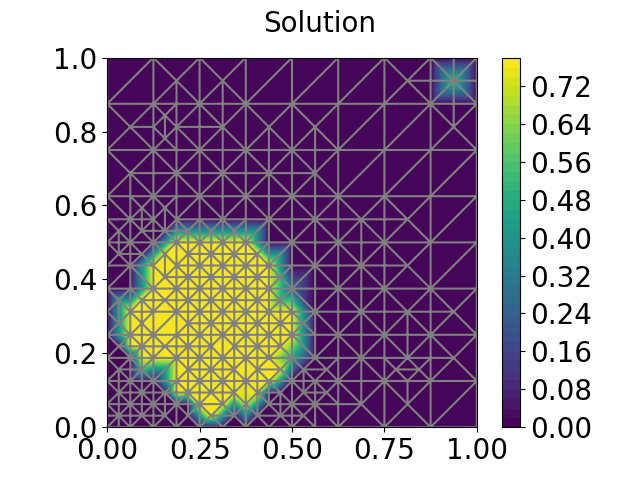} & \includegraphics[width=0.43\textwidth]{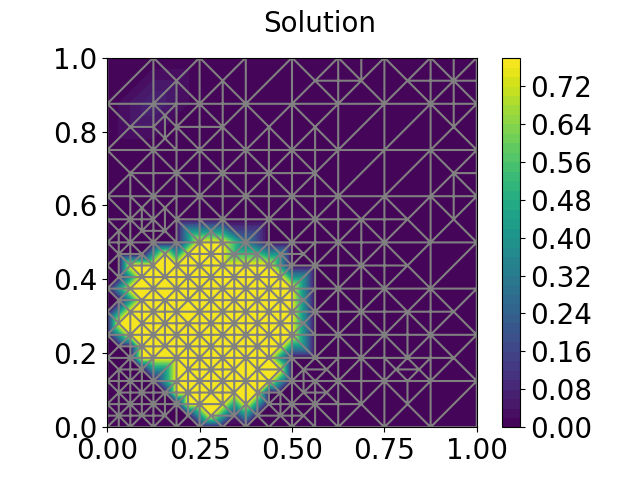} \\
\hline
$\delta = 10.0\%$ & $\delta = 10.0\%$ \\
\includegraphics[width=0.43\textwidth]{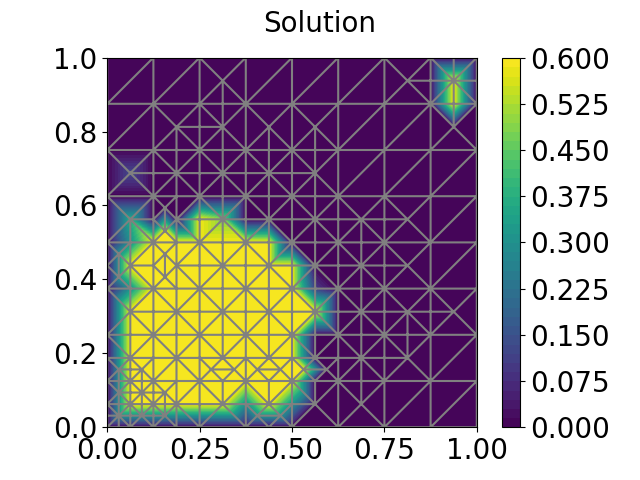} & \includegraphics[width=0.43\textwidth]{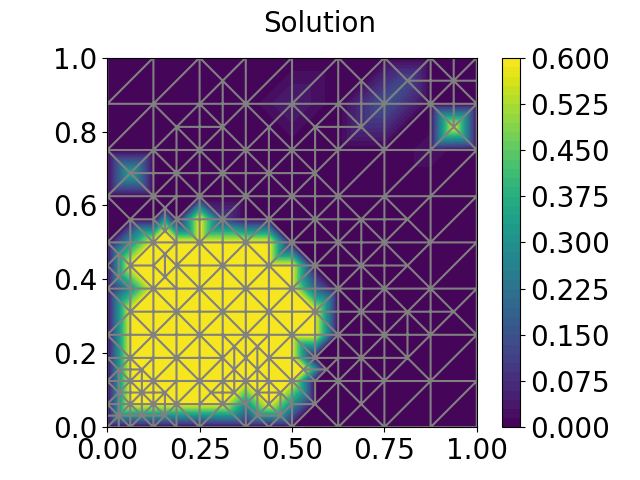} \\
\end{tabular}
\caption{Reconstructions of the source $u$ with adaptive mesh refinement.
\label{fig:adaptive}}
\end{figure}

{\small
\begin{table}[h]
\begin{tabular}
{|l||l|}
\hline
 $\kappa = $ 1 & \\
\hline
& $\delta = 0.1\%$  \\
\hline
\textit{GN iterations}& 22 \\ 
\hline
\revision{
\textit{minimizations}
}
& 
\revision{
70
}
\\ 
\hline
\textit{parameter} $\rho$& 0.9934  \\
\hline
$L^1(\Omega)$ error& 0.0007 \\
\hline
error point 1& 0.0065 \\
\hline
error point 2& 0.0000\\
\hline
error point 3&0.2155 \\
\hline
\hline
& $\delta = 1.0\%$ \\
\hline
\textit{GN iterations}& 11 \\ 
\hline
\revision{
\textit{minimizations}
}
& 
\revision{
40
}
\\ 
\hline
\textit{parameter} $\rho$& 0.9233 \\
\hline
$L^1(\Omega)$ error& 0.0390 \\
\hline
error point 1& 0.0766\\
\hline
error point 2& 0.0000\\
\hline
error point 3&0.6574 \\
\hline
\hline
& $\delta = 5.0\%$\\
\hline
\textit{GN iterations}& 7\\ 
\hline
\revision{
\textit{minimizations}
}
& 
\revision{
24
}
\\ 
\hline
\textit{parameter} $\rho$& 0.7629 \\
\hline
$L^1(\Omega)$ error& 0.1161\\
\hline
error point 1& 0.2370\\
\hline
error point 2& 0.0000\\
\hline
error point 3&0.6530\\
\hline
\hline
& $\delta = 10.0\%$ \\
\hline
\textit{GN iterations}& 5\\ 
\hline
\revision{
\textit{minimizations}
}
& 
\revision{
17
}
\\ 
\hline
\textit{parameter} $\rho$& 0.5859  \\
\hline
$L^1(\Omega)$ error& 0.1545 \\
\hline
error point 1& 0.4140\\
\hline
error point 2& 0.0000\\
\hline
error point 3&0.5356\\
\hline
\end{tabular} \hspace*{0.1\textwidth} \begin{tabular}
{|l||l|}
\hline
 $\kappa = $  100 &\\
\hline
& $\delta = 0.1\%$  \\
\hline
\textit{GN iterations}& 27 \\ 
\hline
\revision{
\textit{minimizations}
}
& 
\revision{
84
}
\\ 
\hline
\textit{parameter} $\rho$& 0.9973 \\
\hline
$L^1(\Omega)$ error & 0.0042\\
\hline
error point 1&  0.0072\\
\hline
error point 2&  0.0000\\
\hline
error point 3& 0.0000\\
\hline
\hline
& $\delta = 1.0\%$ \\
\hline
\textit{GN iterations}& 12 \\ 
\hline
\revision{
\textit{minimizations}
}
& 
\revision{
42
}
\\ 
\hline
\textit{parameter} $\rho$&  0.9400 \\
\hline
$L^1(\Omega)$ error& 0.0349\\
\hline
error point 1& 0.0595\\
\hline
error point 2& 0.0000\\
\hline
error point 3& 0.6696\\
\hline
\hline
& $\delta = 5.0\%$ \\
\hline
\textit{GN iterations}& 8 \\ 
\hline
\revision{
\textit{minimizations}
}
& 
\revision{
24
}
\\ 
\hline
\textit{parameter} $\rho$ & 0.7629 \\
\hline
$L^1(\Omega)$ error& 0.1199\\
\hline
error point 1&  0.2370\\
\hline
error point 2&  0.0000\\
\hline
error point 3& 0.7322\\
\hline
\hline
& $\delta = 10.0\%$ \\
\hline
\textit{GN iterations} & 5 \\ 
\hline
\revision{
\textit{minimizations}
}
& 
\revision{
17
}
\\ 
\hline
\textit{parameter} $\rho$&  0.5859 \\
\hline
$L^1(\Omega)$ error& 0.602\\
\hline
error point 1& 0.4140\\
\hline
error point 2&  0.0000\\
\hline
error point 3& 0.5489\\
\hline
\end{tabular}
\caption{Results obtained from experiments with adaptive mesh refinement.
\label{tab:adaptive}}
\end{table}
}

\section{Conclusions and remarks}\label{sec:concl}
In this paper we have proposed and analyzed an Ivanov regularized Gauss-Newton methods with an a posteriori choice of the regularization radius. Our analysis works in general nonreflexive Banach spaces and therefore also comprises $L^\infty(\Omega)$ as a preimage space, which leads to a bound constrained quadratic minimization problem in each Newton step. This setting is illustrated by numerical experiments for a nonlinear inverse source problem, also giving some first results with an adaptively refined computational mesh.
Further investigations on goal oriented adaptivity will be subject of future research.

\providecommand{\bysame}{\leavevmode\hbox to3em{\hrulefill}\thinspace}
\providecommand{\MR}{\relax\ifhmode\unskip\space\fi MR }
\providecommand{\MRhref}[2]{%
  \href{http://www.ams.org/mathscinet-getitem?mr=#1}{#2}
}
\providecommand{\href}[2]{#2}

\end{document}